\numberwithin{equation}{section}
\theoremstyle{plain}
\newtheorem{theorem}{Theorem}[section]
\newtheorem{lemma}[theorem]{Lemma}
\newtheorem{proposition}[theorem]{Proposition}
\newtheorem{corollary}[theorem]{Corollary}
\theoremstyle{definition}
\newtheorem{definition}[theorem]{Definition}
\newtheorem{example}[theorem]{Example}
\newtheorem*{note}{Note} 
\DeclareMathOperator{\im}{im}
\DeclareMathOperator{\length}{length}
\DeclareMathOperator{\lcm}{lcm}
\DeclareMathOperator{\Lab}{Lab}
\DeclareMathOperator{\rk}{rk}
\DeclareMathOperator{\width}{width}
\newcommand{\com}{\mathrm{\complement}}
\newcommand{\Sub}{\mathbf{Sub}}
\newcommand{\Spec}{\mathbf{Spec}_+}
\newcommand{\M}{\mathbb{M}}
\newcommand{\N}{\mathscr{N}}
\newcommand{\Prime}{\mathscr{P}}
\newcommand{\G}{\mathcal{G}}
\newcommand{\p}{ \mathfrak{p}}
\newcommand{\q}{ \mathfrak{q}}
\newcommand{\rp}{ \mathfrak{r}}
\newcommand{\s}{ \mathfrak{s}}
\newcommand{\tc}{ \mathfrak{t}}
\newcommand{\primerexemple}{{\fontsize{2.5}{4}\selectfont
\begin{forest}
 for tree={circle,draw,l sep=1pt, s sep=10pt}
[ [ [][]  ][ [][ [][] ] ]]
\end{forest}
}}
\newcommand{\un}{{\fontsize{2.5}{4}\selectfont
\begin{forest}
 for tree={circle,draw,l sep=1pt, s sep=10pt}
[ ]
\end{forest}
}}
\newcommand{\dos}{{\fontsize{2.5}{4}\selectfont
\begin{forest}
 for tree={circle,draw,l sep=1pt, s sep=10pt}
[ [][] ]
\end{forest}
}}
\newcommand{\trespositiu}{{\fontsize{2.5}{4}\selectfont
\begin{forest}
 for tree={circle,draw,l sep=1pt, s sep=10pt}
[ [][  [] [ ] ] ]
\end{forest}
}}
\newcommand{\tresnegatiu}{{\fontsize{2.5}{4}\selectfont
\begin{forest}
 for tree={circle,draw,l sep=1pt, s sep=10pt}
[ [ [] [] ] [] ]
\end{forest}
}}
\newcommand{\quatrepositiu}{{\fontsize{2.5}{4}\selectfont
\begin{forest}
 for tree={circle,draw,l sep=1pt, s sep=10pt}
[ [][  [] [ [] [] ] ] ]
\end{forest}
}}
\newcommand{\quatrenegatiu}{{\fontsize{2.5}{4}\selectfont
\begin{forest}
 for tree={circle,draw,l sep=1pt, s sep=10pt}
[ [ [ [] [] ] [] ] [] ]
\end{forest}
}}
\newcommand{\producte}{{\fontsize{2.5}{4}\selectfont
\begin{forest}
 for tree={circle,draw,l sep=1pt, s sep=10pt}
[ [ [] []]   [  [[][]] [[][]] ]  ]
\end{forest}
}}
\newcommand{\dosdos}{{\fontsize{2.5}{4}\selectfont
\begin{forest}
 for tree={circle,draw,l sep=1pt, s sep=10pt}
[ [[][]] [[][]]    ]
\end{forest}
}}
\newcommand{\dosdosdos}{{\fontsize{2.5}{4}\selectfont
\begin{forest}
 for tree={circle,draw,l sep=1pt, s sep=10pt}
[ [[[][]]       [[][]]] [[[][]]     [[][]]]    ]
\end{forest}
}}
\begin{document}


\title[Arithmetic and $k$-maximality of the cyclic free magma]{Arithmetic and $k$-maximality \\ of the cyclic free magma}

\author[C. Card\'o]{Carles Card\'o}
\address{Departament de Ci\`encies de la Computaci\'o, \\Campus Nord, Edifici Omega, Jordi Girona Salgado 1-3. \\Universitat Polit\`ecnica de Catalunya \\ 08034, Barcelona, \\Catalonia (Spain)}
\urladdr{http://www.cs.upc.edu}
\email{cardocarles@gmail.com}

\thanks{This research was supported the recognition 2017SGR-856 (MACDA) from AGAUR (Generalitat de Catalunya).}


\subjclass{08A02, 08B20, 20N99, 03G10}

\keywords{Free magma, Arithmetic, Lattice, Maximality}

\begin{abstract}
We survey free magmas and we explore the structure of their submagmas. 
By equipping the cyclic free magma with a second distributive operation we obtain a ringoid-like structure with some primitive arithmetical properties. 
A submagma is $k$-maximal when there are only $k-1$ submagmas between it and the free magma itself. 
These two tools, arithmetic and maximality, allow us to study the lattice of the submagmas of a free magma.
\end{abstract}

\maketitle


\section {Introduction}

Free Magmas are objects of interest for computer science in so far as their elements are identified with binary trees. See, for example, \cite{knuth1968TheArt, Sedgewick1995Analysis} for a general perspective, or \cite{flajolet1979TheNumber, Amerlynck1998Iterees} for an instance of specific application.  
 Certain subsets of a free magma, \emph{Hall sets} and \emph{Lazard sets}, are used to define a basis of a free Lie algebra, see \cite{Reutenauer1993BookFreeLie}. 
But the main theoretical feature of free magmas relies on the fact that any structure with only one operation can be obtained from a free magma via quotient, \cite{gratzer2008universal, sankappanavar1981course}. 
In spite of computational applications or the utility in Lie algebras, the interest of the free magmas seems to be relegated to that fundamental fact, and algebra textbooks usually introduce them only as a preamble for subsequent more standard structures such as semigroups, monoids or groups. 

Resuming the spirit of Blondel, \cite{blondel1995UneFamille,blondel1994Properties}, the cyclic free magma, 
which we will notate here as $\M$, should have the same status as $\mathbb{N}$, the cyclic free monoid, or as $\mathbb{Z}$, the cyclic free group. In fact Blondel propose to call the elements of the cyclic free magma \emph{structured numbers} (denomination which will not be used here). Unfortunately, it seems that there are no publications equating $\M$ to $\mathbb{N}$ or $\mathbb{Z}$ regarding the structure of its submagmas. The focus of the cited articles is put on the definable operations over trees and the treatment is rather computational than algebraic. 

The lattice of the subgroups of the cyclic free group is simple and beautiful: every subgroup is of the form $a\mathbb{Z}$, with the operations $a\mathbb{Z} + b\mathbb{Z}=\gcd(a,b)\mathbb{Z}$ and $a\mathbb{Z} \cap b\mathbb{Z}=\lcm(a,b)\mathbb{Z}$. 
One could think that since the operation in $\M$ is so simple (in terms of trees, the result of merging two trees by a new root), the lattice of its submagmas, $\Sub(\M)$, should not be too complex, and hence, not too interesting. 
This is only partially true. It is easy to prove that a submagma of a free magma is always free, and that two free magmas are isomorphic iff their generator sets have the same size. However, the set of submagmas as a whole is not so trivial as in the case of integers. By way of example, $\Sub(\M)$ is not countable, not distributive, nor modular.  

In order to study $\Sub(\M)$ we need to review some basic facts which make the first sections deal with elementary algebraic notions. Thus, this part of the article can be considered a brief survey on free magmas with an arithmetical perspective and some additional new results. 
More specifically, first we equip $\M$ with a product operation which makes $\M$ resemble a ring with some primitive arithmetic properties and which will allow us to manipulate submagmas. Such operation has been already considered earlier, and actually \cite
{blondel1995Structured, blondel1995Operations, blondel1995UneFamille,blondel1994Properties, Duchon1979Right} define operations of upper degree over binary trees, such as exponentiation and further, which we will not deal with here.  
Secondly, we study the $k$-maximal submagmas of $\M$. We say that a submagma $M \subseteq \M$ is $k$-maximal when there are only $k-1$ submagmas between $M$ and $\M$. The knowledge of the arithmetic and the $k$-maximality offers us a first picture of $\M$ similar to that of $\mathbb{Z}$. 
  
The structure of the paper is as follows. Section~\ref{FreeMagmas} establishes notation and some elementary well-known facts on free magmas. The first contribution of this article is a characterization of a free magma as a graded and unidecomposable magma. 
Section~\ref{TheArithmetic} introduces arithmetical properties, and we show some results concerning the number of prime elements of a given length. Section~\ref{SubmagmasAndGenerators} examines some submagmas and their generating sets. We show that $\Sub(\M)$ is not countable. Section~\ref{IdealsAndPrincipalIdeals} studies the multiplicative ideals. We compare principal ideals of $\M$ to those of the  integers. We show that $\Sub(\M)$ is not modular. The main contribution is given in Section~\ref{MaximalAndAdditivePrime} and Section~\ref{AllTheFinitePrimeSets} which introduces maximality and the additive prime sets. We exhibit a method to calculate all of them. This permits the identification of the first positions of $\Sub(\M)$.

\section {Free magmas} \label{FreeMagmas}

We notate $\mathbb{N}$ the set of strictly positive integers. $2^X$ is the power set of the set $X$. Here we prefer the term \emph{magma} according to Serre and Bourbaki \cite[p.~1]{Bourbaki1989Algebra}, over the alternative \emph{groupoid} \cite[p.~88]{Rosenfeld1968Algebraic} which sometimes refers to a generalization of group notion in a categorial sense \cite{Brown1987From}. 
\begin{definition} \label{DefMagma} A \emph{magma} is a set $M$ with an operation (or \emph{law of composition}) $+:M\times M \longrightarrow M$. 
A \emph{gradation} of a magma $M$ is a mapping $\ell: M \longrightarrow \mathbb{N}$ such that $\ell(x+y)=\ell(x)+\ell(y)$. A magma is \emph{graded} iff it can be equipped with a gradation. 

A \emph{submagma} of a magma $M$ is a subset which is stable for the operation. We notate $\Sub(M)$ the set of submagmas of $M$. For convenience we will assume $\emptyset \in \Sub(M)$. 
\end{definition} 

Given a subset $X\subseteq M$ of the magma $M$, its \emph{generated magma} $\langle X \rangle$ is the least submagma in $M$ containing $X$.  
 We say that a set $X\subseteq M$ is a \emph{minimal generating set} iff for each subset $Y\subsetneq X$ we have $\langle Y \rangle \subsetneq \langle X\rangle$. When a magma $M$ is given by the context, we notate $A^\com=M\setminus A$. 
Notions and notation on homomorphisms, direct products and other algebraic concepts are defined as it is usual. 
\begin{definition} \label{FreeMagma} Given a non-empty set $X$ consider the sequence of sets:
\begin{align*}
X_1&=X,\\
X_{n}&=\bigcup_{k=1}^{n-1} X_k \times X_{n-k}, \mbox{ for } n>1.
\end{align*}
We define the \emph{free magma on} $X$, $(\M_X;+)$, as $\M_X=\bigcup_{n=1}^\infty X_k$ and $x+y=(x,y)$ for each $x,y\in \M_X$. In general, we say that a magma $M$ is \emph{free} iff there is some set $X$ such that $M\cong \M_X$. 
\end{definition}

We have adopted the definition of Bourbaki \cite[p.~81]{Bourbaki1989Algebra}. 
Of course this definition is equivalent to define freeness by satisfying the \emph{universal mapping property}, see \cite[p.~71]{sankappanavar1981course}. 
$X$ is the unique minimal generator set of the free magma $\M_X$. Given a magma $M$, any homomorphism of magmas $f:\M_X \longrightarrow M$ stays defined by the image of the generators.  
If $\langle X\rangle =M$, we have that the mapping $f:\M_X\longrightarrow M$ defined over the generators as $f(x)=x$ is an epimorphism. By the isomorphism theorem, $\M_X/\ker f \cong M$. Thus, any magma can be obtained from a free magma via quotient. 
All above pertains to the well-known folklore of algebra, see for example \cite{sankappanavar1981course, gratzer2008universal}.

Since the sets $X_i$ are disjoint, the magma $\M_X$ can be naturally graded with the mapping $\ell_X: \M_X \longrightarrow \mathbb{N}$ defined as $\ell_X(x)=n$ iff $x \in X_n$. Notice that $\ell_X(x)=1$ for all $x\in X$.
We have that $\M_X\cong \M_Y$ iff $X$ and $Y$ are bijectable, which can be proved straightforward from the definition using the gradation $\ell_X$.  
We will notate $\M=\M_{\{1\}}$ the cyclic free magma. Usually free magmas on $X$ are notated $M(X)$, however, since we are going to work a lot with the cyclic free magma, we prefer the shortest notation above. 
We call \emph{length} the gradation of $\M$ and we will notate it as $\ell=\ell_{\{1\}}$. 
There is a natural partial order in $\M$. Given the relation:
\begin{align*}z \rightarrow z' \iff z=x+y\,\, \mbox{ and }\,\, z'=x \mbox{ or } z'=y, \end{align*}
we define $\geq$ as the transitive and reflexive closure of $\rightarrow$. This satisfies that for all $x,y \in \M$: 
\begin{align*}
1\leq x;\,\,\,\,\,\,\,\,\, x, y \leq x+y;\,\,\,\,\,\,\,\,\, x\leq y \implies \ell(x)\leq \ell(y). 
\end{align*}
Elements in $\M$ are of the form: $1, 1+1, 1+(1+1), (1+1)+(1+1), \ldots$.
We fix the following abbreviations:
\begin{align*}2=1+1, \,\,\,\,\,\, 2^2=2+2, \,\,\,\,\,\, 2^3=2^2+2^2, \,\,\,\,\,\, 2^4=2^3+2^3,\ldots \end{align*}
with the convention that $2^0=1$ and $2^1=2$ (see later the Section~\ref{TheProduct} to understand this notation). 
For each $n> 1$ we define recursively:
\begin{align*}1_-=1, \,\,\,\, (n+1)_-=n_-+1, \,\,\,\,\mbox{and}\,\,\,\, 1_+=1, \,\,\,\, (n+1)_+=1+n_+.\end{align*}
Notice that $2_-=2=2_+$. 
Elements of $\M$ are identified with binary trees. Thus:
\begin{align*}1=\un \, , \,\,\,\,\,\,\,\,\, 2= \dos  \, , \,\,\,\,\,\,\,\,\, 3_-=\tresnegatiu \, , \,\,\,\,\,\,\,\,\, 3_+=\trespositiu \, , \end{align*}
\begin{align*} 4_-=\quatrenegatiu, \,\,\,\,\,\,\,\,\,\,\,\,\,\,\, 4_+=\quatrepositiu \, , \, \ldots \end{align*}
The sum $+$ is the union of the trees by a new root:
\begin{align*}2+3_+=(1+1)+(1+(1+1))=\Big( \dos \Big) + \Big( \, \trespositiu \Big) = \primerexemple \, .\end{align*}
Thus, powers of 2 coincide with full binary trees:
\begin{align*}2=\dos \, , \,\,\,\,\, 2^2=\dosdos \, , \,\,\,\,\, 2^3=\dosdosdos \, , \, \ldots\end{align*}
Given a set of $X\subseteq \M$, we notate $ (X)_n =\{ x\in X \mid \ell(x)=n\}$. For example:
\begin{align*}
(\M)_1&=\{1\},\\
(\M)_2&=\{2\},\\
(\M)_3&=\{3_-,\, 3_+\},\\
(\M)_4&=\{4_-,\, 1{+}3_-,\, 2^2,\, 3_+{+}1,\, 4_+\},\\
(\M)_5&=\{5_-,\, 1{+}4_-,\, 1{+}(1{+}3_-),\, (1{+}3_-){+}1,\, 2{+}3_-, \, 3_-{+}2,\, 2^2{+}1,\\
&\,\,\,\,\,\,\,\,\,\,\,\, 1{+}2^2, \, 2{+}3_+,\, 3_+{+}2,\, 1{+}(3_+{+}1),\,  (3_+{+}1){+}1,\,  4_+{+}1,\,  5_+\}.
\end{align*}
It is well-known that $|(\M_n)|=C_{n-1}$ where $C_n$ are the Catalan numbers defined as: 
\begin{align*} C_n=\frac{1}{n+1} { 2n \choose n}  \approx \frac{4^n} {(n+1)\sqrt{\pi n}},  \end{align*}
for every $n\geq 0$, see \cite{crepinvsek2009efficient, Dutton1986Comput}. For commodity we will write $c_n=C_{n-1}$. 

\begin{definition} \label{CharactFree}A magma $M$ is \emph{unidecomposable} iff for all $x,y,x',y'\in M$: 
\begin{align*}x+y=x'+y' \implies x=x' \mbox{ and } y=y'.\end{align*} 
\end{definition}
In the theory of monoids, Levi's characterization states that a graded monoid is free iff it is equidivisible, \cite{levi1944semigroups, Lallement1979Semigroups, sakarovitch2009elements}. We can prove an analogous result for unidecomposable magmas. The length and the uniqueness of decomposition allow us to define several notions and to prove some results by induction. 

\begin{theorem} A magma $M$ is free iff it is graded and unidecomposable.
\end{theorem}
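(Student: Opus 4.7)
The forward implication is immediate from the Bourbaki construction. If $M \cong \M_X$, the disjoint decomposition $\M_X = \bigcup_n X_n$ provides the gradation $\ell_X$, and since the operation is defined as ordered pairing ($x+y := (x,y)$), the equality $x+y = x'+y'$ is just $(x,y) = (x',y')$, giving unidecomposability.

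For the converse, the essential task is to single out the correct generating set and show that the canonical map to $M$ is a bijection. I would let $X \subseteq M$ be the set of \emph{indecomposable} elements --- those not expressible as $u+v$ for any $u,v \in M$ --- rather than the length-$1$ elements. Because $\ell(u)+\ell(v) \geq 2$ for all $u,v \in M$, every length-$1$ element is automatically indecomposable, but one should beware that $X$ could in principle contain elements of higher $\ell$-length too.

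With this choice of $X$, strong induction on $\ell(z)$ shows that $X$ generates $M$: if $\ell(z)=1$ then $z\in X$, and otherwise either $z\in X$ or $z=u+v$ with $\ell(u),\ell(v)<\ell(z)$, so the induction hypothesis covers $u$ and $v$. Next I extend the inclusion $X\hookrightarrow M$ to a magma homomorphism $\phi:\M_X\to M$ via the universal property; surjectivity is immediate from the previous step. For injectivity I induct on $\ell_X(u)$ and show $\phi(u)=\phi(v)\Longrightarrow u=v$. In the base case $u$ is a generator of $\M_X$, so if $v=v_1+v_2$ in $\M_X$ then $\phi(v)=\phi(v_1)+\phi(v_2)$ is decomposable in $M$, contradicting $\phi(u)=u\in X$; hence $v$ is also a generator and $\phi(v)=v=u$. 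In the inductive step I write $u=u_1+u_2$; the same argument rules out $v$ being a generator, and writing $v=v_1+v_2$ the unidecomposability of $M$ forces $\phi(u_i)=\phi(v_i)$, so the induction hypothesis yields $u_i=v_i$ and hence $u=v$.

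The main subtlety is not identifying $X$ with the length-$1$ elements of $M$ (which is not forced by the hypotheses) and keeping the two gradations --- $\ell$ on $M$ and $\ell_X$ on $\M_X$ --- separate, since they need not agree on $X$: the generation step uses $\ell$, whereas the injectivity induction uses $\ell_X$.
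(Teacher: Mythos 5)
Your proof is correct, and it follows the same overall strategy as the paper --- take the indecomposable elements as the generating set $X$, extend the inclusion to the canonical homomorphism $\M_X\to M$, and verify bijectivity --- but your treatment of injectivity is genuinely different and leaner. The paper first proves a partition lemma: it defines $M_1=M\setminus(M+M)$ and $M_n=\bigcup_{k=1}^{n-1}M_k+M_{n-k}$, shows by two separate inductions (on the gradation, and on $\max\{i,j\}$) that these sets cover $M$ and are pairwise disjoint, and then uses the disjointness to force $i=i'$ in the injectivity step for the maps $f_n:X_n\to M_n$. You bypass the partition entirely: your induction on $\ell_X$ needs only the observation that generators of $\M_X$ map to indecomposable elements of $M$ while non-generators map to decomposable ones, after which unidecomposability of $M$ splits the equation $\phi(u_1)+\phi(u_2)=\phi(v_1)+\phi(v_2)$ componentwise. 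What the paper's route buys is the explicit graded decomposition $M=\bigsqcup_n M_n$ mirroring the construction of $\M_X$, which is conceptually informative; what yours buys is a shorter argument with one induction instead of three. Your cautionary remarks are also well placed: identifying $X$ with $M\setminus(M+M)$ rather than with $\tilde\ell^{-1}(1)$ is exactly what the paper does, and keeping $\ell$ on $M$ separate from $\ell_X$ on $\M_X$ matters, since an indecomposable element of $M$ may have $\ell$-length greater than $1$. One point worth making explicit when you write this up: the induction hypothesis in your injectivity argument should be quantified as ``for all $u'$ with $\ell_X(u')<\ell_X(u)$ and \emph{all} $v'\in\M_X$,'' so that it applies to the pairs $(u_i,v_i)$ without any constraint on $\ell_X(v_i)$.
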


\begin{proof}
$(\Rightarrow)$  $\M_X$ is graded by $\ell_X$ and the operation $x+y=(x,y)$ in $\M_X$ clearly makes $\M_X$ unidecomposable. Since $\M_X\cong M$ by assumption, $M$ is graded and unidecomposable.  
$(\Leftarrow)$ Let $\tilde{\ell}$ be the gradation of $M$. Consider the following sequence defined recursively: 
\begin{align*}
M_1&=M \setminus (M+M);\\
M_n&=\bigcup_{k=1}^{n-1} M_k+M_{n-k}, \mbox{ for } n>1.
\end{align*}
Let us see that this succession forms a partition of $M$. First we see $\bigcup_{k\geq 1} M_k=M$. The inclusion $\subseteq$ is trivial. For the other inclusion $\supseteq$ we see that for any $x\in M$ there is some integer $i$ such that $x\in M_i$ by induction on $\tilde{\ell}(x)$. If $\tilde{\ell}(x)=1$ then $x$ cannot be decomposed as $x=y+z$ for any $y,z\in M$, otherwise $\tilde{\ell}(x)=\tilde{\ell}(y)+\tilde{\ell}(z)=1$ which is impossible. Therefore, $x\in M_1$. Now we suppose that the statement is true for each $t\in M$ with $\tilde{\ell}(t)<n$ and let $x\in M$ such that $\tilde{\ell}(x)=n$. As above, if $x$ cannot be decomposed as $x=y+z$ then $x\in M_1$. If $x$ can be decomposed then $\tilde{\ell}(x)=\tilde{\ell}(y)+\tilde{\ell}(z)$. Then $\tilde{\ell}(y),\tilde{\ell}(z)<n$ and by hypothesis of induction $y\in M_i$ and $z\in M_j$ for some integers $i,j$. Therefore,  $y+z\in M_i+M_j$, and $x\in \bigcup_{k\geq 1} M_k$.

We see that $M_i\cap M_j=\emptyset$ for all $i\not=j$  
by induction on $n=\max\{i,j\}$. When $n=1$, then $i=1, j=1$, and then the statement is trivially true. Now we suppose that the statement is true for each integer $<n$, i.e. $M_i\cap M_j=\emptyset$, for all $i\not=j$ with $i,j<n$. 
For the sake of contradiction we suppose that there is an $x\in M$ such that $x\in M_k \cap M_{k'}\not=\emptyset$ with $n=\max\{k,k'\}>1$. If $k=1$ then $k'=n>1$, but this is not possible because elements in $M_k=M_1$ are not decomposable, while elements in $M_{k'}=M_n$ are decomposable, which means that $M_k\cap M_{k'} =\emptyset$ which is a contradiction. Similarly, if $k'=1$, then $k=n>1$. Therefore, we can assume $k,k'>1$ and then $x$ can be decomposed in two forms. That is, there are elements $a\in M_i,b \in M_j$ and  $a'\in M_{i'},b' \in M_{j'}$ such that $a+b=x=a'+b'$ with $i+j=k$, and $i'+j'=k'$, which means that $i,j,i',j'<n$ whereby we can apply the hypothesis of induction. Since by assumption $M$ is unidecomposable, $a=a'$ and $b=b'$. Then $M_i \cap M_{i'}\not=\emptyset$ and $M_j \cap M_{j'}\not=\emptyset$ which is a contradiction. 

Now we build the free magma $\M_X$, where $X=M_1$, and we consider the following succession of mappings:
\begin{align*}
&f_1: X_1 \longrightarrow M_1, \,\, f_1(x)=x, \forall x\in X_1\\
&f_n:X_n \longrightarrow M_n, \,\, f_{n}((x,y))=f_i(x)+f_j(y) \mbox{ such that }  (x,y) \in X_i \times  X_j.
\end{align*}
Then we build $f:\M_X \longrightarrow M$, as $f(x)=f_n(x)$ iff $x\in X_n$, which by definition is clearly a homomorphism of magmas. We see that it is indeed an isomorphism.  On the one hand, we see that each $f_n$ is injective by induction on $n$. $f_1$ is injective since it is the identity. We assume that $f_k$ is injective for each $k<n$, and notice that:
\begin{align*}
f_n(z)=f_n(z') &\implies f_n((x,y))=f_n((x',y')) \\ 
&\implies f_i(x)+f_j(y)=f_{i'}(x')+f_{j'}(y') \\
&\implies f_i(x)=f_{i'}(x'), f_j(y)=f_{j'}(y').
\end{align*}
Necessarily if $f_i(x)=f_{i'}(x')$ then $i=i'$, otherwise $M_i\cap M_{i'}\not=\emptyset$. Thus:
\begin{align*} f_i(x)=f_{i'}(x'), f_j(y)=f_{j'}(y') \implies f_i(x)=f_i(x'), f_j(y)=f_j(y').\end{align*}
By hypothesis of induction $f_i,f_j$ are injective, so $x=x'$ and $y=y'$, and $f_n$ is injective. Since every $f_n$ is injective, $f$ is injective. 
On the other hand, since every $f_n$ is surjective and $\bigcup_{k\geq 1} M_k=M$, we have that $\im f=\bigcup_{k\geq 1} \im f_k=\bigcup_{k\geq 1} M_k=M$, therefore $f$ is surjective.  
\end{proof}

\begin{example} The condition of being graded is necessary. Consider for example the trivial magma $M=\{m\}$ which is trivially unidecomposable since the equation $x+y=x'+y'$ have the unique solution $x=y=x'=y'=m$. However, $\{m\}$ is not graded, otherwise $\tilde{\ell}(m)=\tilde{\ell}(m+m)=\tilde{\ell}(m)+\tilde{\ell}(m)$ which is impossible since gradations must be positive. 

In fact, the trivial magma is the unique unidecomposable finite magma. If $M$ is finite and unidecomposable with $|M|>1$, then $|M|^2>|M|$, and then, by the pigeonhole principle, the operation sum $+:M \times M \longrightarrow M$ cannot be injective. This means that there are $(a,b), (a',b') \in M\times M$ with $(a,b)\not=(a',b')$ such that $a+b=a'+b'$, but since $M$ is unidecomposable, $a=a', b=b'$, which is a contradiction. Hence, $|M|=1$. 

A non-trivial example of unidecomposable but non-graded magma is the set of binary trees of finite or infinite depth where the operation $x+y$ consists also in joining the trees $x,y$ by a new root. The uniqueness of decomposition still holds for infinite trees. Notating $2^\infty$ the infinite full binary tree, we have an idempotent element $2^\infty + 2^\infty=2^\infty$ which disallows to define a gradation. 
\end{example}

\begin{example} \label{CartesianProd} The direct product $\M_X\times \M_Y$ is a free magma. We consider the gradation $\tilde{\ell}((x,y))=\ell_X(x)+\ell_Y(y)$. It is trivial to show that it is unidecomposable. We will see later a magma isomorphic to the direct product of two free magmas in Example~\ref{GeneratorProduct}. 
\end{example}

\section{The arithmetic of $\M$} \label{TheArithmetic}
\label{TheProduct}

\begin{definition} \label{DefProduct} The product on $\M$ is defined as the unique operation $\cdot: \M\times \M \longrightarrow \M$ such that for any $x,y,z\in \M$:
\begin{align*}
x\cdot 1 &=1\cdot x =x,\\
x\cdot (y+z)&=x\cdot y+x \cdot z.
\end{align*}
\end{definition}
It could be said that $(\M, +, \cdot,1)$ is a \emph{left semi-ringoid} with unity (a \emph{ringoid} is left- and right-distributive, see \cite[p.~206]{Rosenfeld1968Algebraic}). 
The product $x\cdot y$ can be regarded as the result of substituting each $1$ by $x$ into $y$. For example $2\cdot 3_+= 2\cdot (1+(1+1))=(2+2(1+1))=(2+(2+2))$, or in terms of trees: 
\begin{align*}2\cdot 3_+=\Big(\dos \Big) \,\,\, \cdot \,\,\, \Big( \,\trespositiu \Big)= \producte \, .\end{align*}
The following properties can be checked by induction on the length: 
\begin{align*}
\ell(xy)=\ell(x)\ell(y), \,\,\,\,\,\, x\leq y \implies ax \leq ay, 
\end{align*}
for each $x,y,a\in \M$. Let us note that every magma $M$ can be equipped with an external product $\circ: M \times \M \longrightarrow M$, holding the same conditions that in Definition~\ref{DefProduct}: $x{\circ} 1 =x$, $x{\circ} (a+b)=x{\circ} a+x{\circ} b$. In addition we have $x{\circ} (a {\cdot} b)=(x{\circ} a){\circ} b$ which makes $M$ a module-like structure. 

\begin{definition} Given $a,x \in \M$ we say that $a$ \emph{divides (at left)} $x$, notated $a|x$, iff there is some $y \in \M$ such that $x=ay$. We say that $x$ is \emph{prime} iff the only divisors of $x$ are $1$ and $x$ itself. We notate $\Prime$ the set of prime elements. The \emph{greatest common divisor} of $x,y \in \M$, notated $\gcd(x,y)$, is the greatest common leftmost divisor of $x$ and $y$.
\end{definition}
It can be proved by induction on the length that the product is associative, right and left cancellative and equidivisible. Thus, $(\M; \cdot)$ becomes a monoid with $1$ as identity. 
Every element $x \in \M$ has a unique factorization in prime elements, see \cite{blondel1995Structured}. 
We notice that if $n$ is a prime number then any element $x\in (\M)_n$ is prime. If $x$ was composite then $x=ab$ for some $a,b \in \M$, $a,b\not=1$. Applying $\ell$ we would have $n=\ell(x)=\ell(a)\ell(b)$ with $\ell(a),\ell(b)\not=1$ and then $n$ would be composite. We have, therefore, infinite prime elements and $(\M; \cdot)$ is an infinitely generated free monoid with the set of prime elements as generator: $(\M;\cdot)\cong (\Prime^*;\cdot)$. 

An elementary result of the ordinary arithmetic states that an integer $z>0$ is a prime number iff for all the sums $z=x+y$ with $x,y>0$ we have that $x$ and $y$ are coprime numbers (proof: if $x+y=z$, then  $\gcd(x,y)=\gcd(x,z-x)=\gcd(x,z)$; it is clear that $z$ is prime iff $\gcd(x,z)=1$ for each $x$, $0<x<z$).
In $\M$ the same characterization occurs as a particular case: the decomposition of $z=x+y$ is unique.  

\begin{theorem} \label{LemmaDivisio} For any $a,x,y \in \M$:
\begin{enumerate}
\item  $a| (x+y)$ iff either $a|x, a|y$ or $a=x+y$;
\item $x+y\in \Prime$ iff $\gcd(x,y)=1$. 
\end{enumerate}
\end{theorem}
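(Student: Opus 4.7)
The plan is to prove (1) first by a direct decomposition argument that leans on unidecomposability, and then derive (2) as a corollary of (1) together with the definition of the greatest common divisor.

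For the forward direction of (1), suppose $a \mid (x+y)$, so there is some $z \in \M$ with $x+y = az$. I split into cases on whether $z = 1$ or not. If $z = 1$, then $a = x+y$ directly. If $z \neq 1$, then (since $\M$ is graded and unidecomposable, hence free) $z$ decomposes uniquely as $z = u + v$, and by left-distributivity we get $x+y = a(u+v) = au + av$. Unidecomposability of $\M$ then forces $x = au$ and $y = av$, so $a \mid x$ and $a \mid y$. The reverse direction is immediate: if $a = x+y$, take cofactor $1$; if $a \mid x$ and $a \mid y$ with $x = au$, $y = av$, then distributivity gives $x+y = a(u+v)$.

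For (2), the forward direction is by contrapositive. Assume $d := \gcd(x,y) \neq 1$. Since $d$ is a common divisor, part (1) applied to $d \mid x$ and $d \mid y$ yields $d \mid (x+y)$. But $\ell(d) \leq \ell(x) < \ell(x) + \ell(y) = \ell(x+y)$, so $d \neq x+y$, and combined with $d \neq 1$ this contradicts primality of $x+y$. Conversely, assume $\gcd(x,y) = 1$ and let $a \mid (x+y)$. By (1), either $a = x+y$ or $a$ is a common divisor of $x$ and $y$; in the latter case the length bound $\ell(a) \leq \ell(\gcd(x,y)) = 1$ forces $a = 1$, since $1$ is the only element of length one. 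Hence the only divisors of $x+y$ are $1$ and $x+y$, so $x+y$ is prime.

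The main obstacle is the forward direction of (1): one must rule out any decomposition $x + y = au + av$ with $au \neq x$. This is exactly where unidecomposability (established by the previous theorem characterizing free magmas) does the work, and everything else follows mechanically from the distributive law, the multiplicativity $\ell(ab) = \ell(a)\ell(b)$, and the fact that $1$ is the unique element of length one in $\M$.
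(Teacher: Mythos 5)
Your proof is correct and follows essentially the same route as the paper: part (1) via unique decomposition of the cofactor $z$ plus left-distributivity, and part (2) as a corollary of (1) using the fact that a common divisor of $x$ and $y$ is controlled by $\gcd(x,y)$. The only differences are organizational (contrapositive instead of contradiction in (2), and an explicit length bound to rule out $d=x+y$, which the paper leaves implicit), not mathematical.
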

\begin{proof} 
(1) $(\Leftarrow)$ trivial. $(\Rightarrow)$ If $a|(x+y)$, we have $x+y=az$ for some $z\in \M$. If $\ell(z)=1$, then $z=1$ and $x+y=a$, and we are done. If $\ell(z)>1$, then we can decompose $z=x'+y'$ and then $az=ax'+ay'$. Since this decomposition is unique and $az=x+y$, we have $ax'=x$ and $ay'=y$, i.e. $a|x$ and $a|y$.

(2) see an alternative proof in \cite{blondel1995Structured} for this characterization; here we remake it in a more arithmetical style. $(\Rightarrow)$ let $x+y \in \Prime$ and let us suppose that $\gcd(x,y)=d \not=1$. Then there are $x'$ and $y'$ such that $x=dx'$ and $y=dy'$. So $x+y=dx'+dy'=d(x'+y')$, whereby $x'+y' \not \in \Prime$, which is a contradiction. 
$(\Leftarrow)$ We suppose $x+y\not \in \Prime$ with $\gcd(x,y)=1$. There is some $d\in \M$, $d\not=1$ such that $d|(x+y)$. By point (1), either $d|x$ and $d|y$ or $d=x+y$. In the first case  $d | \gcd(x,y)$ so $\gcd(x,y)\not=1$ which is a contradiction; in the second case $x+y=d$, however if $d|(x+y)$ with $d\not=1$, then there is some $z \in \M$ with $z\not=1$ (otherwise $d=x+y$ would be prime) such that $x+y=dz$. Combining this with $x+y=d$ we have $d=dz$, that is, $z=1$, which is a contradiction. 
\end{proof}

The product allows us to characterize the endomorphisms of $\M$. We notate $\mathrm{End}(\M)$ the set of endomorphisms, and $\mathrm{Aut}(\M)$ the set of automorphisms. 

\begin{proposition} \label{HomoProperties} Some properties of the homomorphisms of $\M$ are the following:
\begin{enumerate}
\item Every endomorphism is of the form $f(x)=ax$ for some $a\in \M$. Thus:
\begin{align*}
(\mathrm{End}(\M); \circ)\cong (\M; \cdot) \cong (\Prime^*;\cdot).
\end{align*}
\item Every endomorphism is a monomorphism. 
\item For each endomorphism $f$ we have $f(xb)=f(x)b$, for any $b\in \M$.  
\item $\mathrm{Aut}(\M)=\{\mathrm{Id}\}$. 
\end{enumerate}
\end{proposition}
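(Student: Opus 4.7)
My plan is to prove everything by leveraging the fact that $\M$ is free on the single generator $\{1\}$, together with the associativity, left-distributivity and left-cancellativity of the product already established in Section~\ref{TheArithmetic}.

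For part (1), I would start by noting that any endomorphism $f$ is uniquely determined by $a := f(1)$, since a homomorphism out of a free magma is fixed by its values on the generators. I would then argue by induction on $\ell(x)$ that $f(x) = ax$: the base case is $f(1) = a = a \cdot 1$, while the inductive step uses the decomposition $x = y+z$, which is unique by the unidecomposability of $\M$, together with the left distributivity built into Definition~\ref{DefProduct}:
\begin{align*}
f(y+z) \;=\; f(y) + f(z) \;=\; ay + az \;=\; a(y+z).
\end{align*}
For the isomorphism chain, I would verify that $\phi : a \mapsto f_a$, with $f_a(x) = ax$, is a monoid isomorphism from $(\M;\cdot)$ to $(\mathrm{End}(\M);\circ)$: injectivity comes from evaluating at $1$, surjectivity from the classification just obtained, and associativity of $\cdot$ yields $(f_a\circ f_b)(x) = a(bx) = (ab)x = f_{ab}(x)$. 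The final isomorphism $(\M;\cdot)\cong (\Prime^*;\cdot)$ was already recorded in the previous subsection.

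Parts (2)--(4) are then short consequences of part (1). For (2), given $f(x) = ax$, left-cancellativity of the product immediately yields injectivity. For (3), $f(xb) = a(xb) = (ax)b = f(x)b$ by a single application of associativity. For (4), suppose $f = f_a$ is an automorphism; surjectivity forces some $y\in\M$ with $ay = 1$, and then $\ell(a)\ell(y) = \ell(ay) = 1$ forces $\ell(a) = 1$, so $a = 1$ and $f = \mathrm{Id}$.

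There is no real obstacle here: every step reduces to machinery already in place, namely unique decomposition, left distributivity by definition of $\cdot$, and associativity and cancellativity of the product from the preamble of this section. The only point that deserves a line of care is the silent appeal to unidecomposability inside the induction of (1), which is what guarantees that $f(y+z)$ is unambiguous so that rewriting it as $a(y+z)$ is legitimate.
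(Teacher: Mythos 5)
Your proof is correct and follows essentially the same route as the paper: part (1) by induction on $\ell(x)$ with $a=f(1)$, part (2) from left-cancellativity, part (3) from associativity, and part (4) by a length argument showing surjectivity forces $a=1$. The only difference is that you spell out details the paper leaves implicit (the inductive step and the verification that $a\mapsto f_a$ is a monoid isomorphism), which is fine.
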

\begin{proof} (1) by induction on the length, taking $a=f(1)$. (2) $f(x)=f(y)$ means that $ax=ay$. Since the the product is cancellative, $x=y$. So every endomorphism is a monomorphism. (3) $f(xb)=a(xb)=(ax)b=f(x)b$.   
(4) $\ell(f(x))=\ell(ax)=\ell(a)\ell(x)\geq \ell(a)$. If $\im(f)=\M$, in particular $1\in \im(f)$, and then $f(x)=1$ for some $x\in \M$. Taking $\ell$ we have $1=\ell(1)=\ell(f(x))\geq \ell(a)$, whereby $a=1$. Thus, if $f(x)=ax$ is an epimorphism, $a=1$. Hence, $\mathrm{Aut}(\M)=\{\mathrm{Id}\}$. 
\end{proof}

We notate $\Pi_n$ the number of prime elements in $\M$ with length $n$, $\Pi_n=|(\Prime)_n|$. The first values of $\Pi_n$, $n\geq 1$, are: 
\begin{align*} 
0,\, 1,\, 2,\, 4,\, 14,\, 38,\, 132,\, 420,\, 1426,\, 4834,\, 16796,\, 58688,\, 208012, \ldots,
\end{align*}
see \cite{Sloan2018TheEncy}.
A first formula for $\Pi_n$ is: 
\begin{align*} 
\Pi_n=c_{n} - \sum_{ \stackrel{1<d<n}{ d|n}} \Pi_{d} \cdot c_{\frac{n}{d}}\: . 
\end{align*}
The right-hand side of the formula subtracts the number of composite elements from $c_n$. An element in $\M$ is composite iff it has some prime proper divisor and from there, the formula (see \cite{Sloan2018TheEncy, Amerlynck1998Iterees}).
We show a more closed expression for $\Pi_n$.  The number of prime factors counting multiplicity of the integer $n$ is notated $\Omega(n)$, for example, $\Omega(2^3 \cdot 5^4)=7$.

\begin{proposition} \label{CountingPrimes} For each $n>1$. If $n$ is prime $\Pi_n=c_n$. When $n$ is composite:
\begin{align*}
\Pi_n =\sum_{s=1}^{\Omega(n)}   \sum_{\stackrel{1<k_1,\ldots,k_s < n}{k_1\cdots k_s=n}}  (-1)^{s+1} c_{k_1}\cdots c_{k_s}.
\end{align*}
\end{proposition}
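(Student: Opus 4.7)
The plan is to exploit the unique prime factorization $(\M;\cdot)\cong(\Prime^*;\cdot)$ together with the multiplicativity $\ell(xy)=\ell(x)\ell(y)$. Every $x\in(\M)_n$ with $n\geq 2$ decomposes uniquely as $x=p_1p_2\cdots p_s$ with $p_i\in\Prime$, and the tuple $(\ell(p_1),\ldots,\ell(p_s))$ is a composition of $n$ into integer factors, each $\geq 2$. Counting elements of $(\M)_n$ by this length-tuple gives the master identity
\begin{align*}
c_n \;=\; \sum_{s=1}^{\Omega(n)}\;\sum_{\substack{k_1,\ldots,k_s\geq 2\\ k_1 k_2\cdots k_s=n}} \Pi_{k_1}\Pi_{k_2}\cdots\Pi_{k_s},
\end{align*}
where the cutoff $s\leq\Omega(n)$ is forced because any factorization of $n$ into $s$ integers $\geq 2$ contributes at least $s$ primes to $n$ counted with multiplicity.

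The next step is to invert this identity for $\Pi_n$. The cleanest route is to pass to the Dirichlet convolution ring of arithmetic functions $(f*g)(n)=\sum_{ab=n}f(a)g(b)$. I set $c_1:=0$ (separating out the identity $1\in\M$, which corresponds to the empty product and is not counted by any $s\geq 1$) and keep $\Pi_1=0$. The master identity then rewrites, for all $n\geq 1$, as
\begin{align*}
c \;=\; \sum_{s\geq 1}\Pi^{*s} \;=\; \Pi*(\delta-\Pi)^{-1},
\end{align*}
with $\delta$ the Dirichlet identity; the inverse exists as a formal convolution series since $\Pi(1)=0$. Solving algebraically yields $\Pi = c*(\delta+c)^{-1}$, and since $c(1)=0$ one has the convergent expansion $(\delta+c)^{-1}=\delta-c+c^{*2}-c^{*3}+\cdots$, whence
\begin{align*}
\Pi \;=\; c-c^{*2}+c^{*3}-\cdots \;=\; \sum_{s\geq 1}(-1)^{s+1}\,c^{*s}.
\end{align*}
Reading off the coefficient at $n$ and noting that the constraint $k_i\geq 2$ in each convolution is automatic from $c_1=0$, while the cutoff $s\leq\Omega(n)$ is again forced, yields the claimed formula.

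The prime case is immediate: if $n$ is prime, the only factorization of $n$ into integers $\geq 2$ has $s=1$ and $k_1=n$, so the master identity itself reads $c_n=\Pi_n$. A purely inductive alternative proceeds on $\Omega(n)$ from the pre-stated recurrence $\Pi_n=c_n-\sum_{d\mid n,\,1<d<n}\Pi_d\,c_{n/d}$, substituting the inductive hypothesis for each $\Pi_d$ and collating terms by the resulting composition of $n$; the main obstacle there is tracking the inclusion--exclusion signs and showing that a product $c_{k_1}\cdots c_{k_s}$ receives coefficient $(-1)^{s+1}$ exactly once, which is precisely what the convolution inversion dispatches automatically via the geometric series identity $\Pi=c/(\delta+c)$.
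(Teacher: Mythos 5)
Your proof is correct, and it reaches the formula by a route that differs from the paper's in both its combinatorial input and its inversion mechanism. The paper starts from the recurrence $\Pi_n = c_n - \sum_{d\mid n,\,1<d<n}\Pi_d\, c_{n/d}$ (stated just before the proposition and obtained by counting composite elements via a prime left divisor), defines an ad hoc \emph{partial} Dirichlet convolution omitting the extreme terms $a_1b_n$ and $a_nb_1$, and then substitutes the recurrence into itself $N$ times, showing that the explicit remainder $(-1)^N\,\Pi\star c^{\star N}$ vanishes at $N=\Omega(n)$ because it would require $\Omega(n)+1$ proper factors of $n$. You instead derive the full factorization identity $c_n=\sum_{s}\sum_{k_1\cdots k_s=n,\;k_i\ge 2}\Pi_{k_1}\cdots\Pi_{k_s}$ directly from unique prime factorization in $(\M;\cdot)$ and the multiplicativity of $\ell$, and then invert in the ordinary ring of arithmetic functions; your normalization $c_1:=0$, $\Pi_1=0$ is exactly what makes the paper's partial convolution coincide with the standard Dirichlet convolution, so the finite iteration-with-remainder is replaced by the clean algebraic chain $c=\Pi*(\delta-\Pi)^{-1}$, hence $\Pi=c*(\delta+c)^{-1}=\sum_{s\ge 1}(-1)^{s+1}c^{*s}$, with the truncation at $\Omega(n)$ falling out of $c_1=0$ just as in the paper. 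The two arguments share the same core (alternating convolution powers of $c$, cut off at $\Omega(n)$), but yours buys a tidier formalism and a self-contained derivation of the input identity, at the modest cost of invoking full unique factorization rather than only the existence of a prime left divisor for composites. One cosmetic remark: the constraint $1<k_i<n$ in the statement must be read as simply $k_i>1$ when $s=1$ (otherwise the leading term $c_n$ would be excluded); your convention $c_1=0$ handles this automatically, and for $s\ge 2$ the upper bound is implied by $k_i\ge 2$ and $k_1\cdots k_s=n$.
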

\begin{proof} Clearly if $n$ is prime $\Pi_n=c_n$, so we suppose $n$ composite. We define the \emph{partial Dirichlet convolution product} of two integer sequences $a_n,b_n$ as $(a\star b)_n=\sum_{1<i,j<n, \, ij=n} a_i b_j$, where the sum is null when there are no $i,j$ such that $i\cdot j=n$ with $1<i,j<n$ (we say ``partial'' because the extrems $a_1b_n$ and $a_nb_1$ are not counted;  see, for example, \cite{knuth1989concrete} for Dirichlet products). Given $m$ integer sequences  $(a_1)_n, \ldots, (a_m)_n$ their product is:
\begin{align*}
(a_1\star \cdots \star a_m)_n=\sum_{\stackrel{1<k_1,\ldots,k_s <n}{k_1\cdots k_s=n}} (a_1)_{k_1}\cdots (a_m)_{k_s},
\end{align*}
whereby this operation is associative, and in addition, it is distributive with the sum of sequences.
The $m$th power of $a_n$ is notated $(a^{\star m})_n$. Thus, the  formula above for $\Pi_n$ can be written as $\Pi_n=c_n - \Pi_n \star c_n$. By substituting this into itself $N$ times we get (we abbreviate $\Pi=\Pi_n, c=c_n$):
\begin{align*} 
\Pi&=c - \Pi \star c,\\
\Pi&=c-c^{\star 2} + \Pi\star c^{\star 2}, \\
\Pi&=c-c^{\star 2} + c^{\star 3} - \Pi\star c^{\star 3}, \\
\Pi&=c-c^{\star 2} + c^{\star 3} - c^{\star 4} + \Pi\star c^{\star 4}, \\
\,\, \vdots \\
\Pi&=\Big(\sum_{s=1}^N (-1)^{s+1} c^{\star s} \Big) + (-1)^{N} \Pi\star c^{\star N}.
\end{align*} 
Now we see that if $N=\Omega(n)$ then $(\Pi\star c^{\star N})_n=0$. This is because the sum in the expansion of this convolution product,
\begin{align*}
(\Pi \star c^{\star N})_n=\sum_{ \stackrel{1<t, k_1, \ldots, k_N <n} {t\cdot k_1\cdots k_N=n}} \Pi_{t}c_{k_1} \cdots c_{k_N},
\end{align*}
requires $N=\Omega(n)+1$ proper factors, which is impossible, since $n$ has at most $\Omega(n)$ proper factors, and then the sum is necessarily null. Hence, $\Pi=\sum_{s=1}^{\Omega(n)} (-1)^{s+1} c^{\star s}$, where $(c^{*s})_n=\sum_{\stackrel{1<k_1,\ldots,k_s < n}{k_1\cdots k_s=n}}  c_{k_1}\cdots c_{k_s}$. 
\end{proof}

\begin{example} By using Proposition~\ref{CountingPrimes}, some particular values of $\Pi_n$ can be calculated. For $p,q,r$ different prime numbers: 
\begin{align*}
\Pi_{p}&=c_p,\\
\Pi_{pq} &=c_{pq}-2c_pc_q,\\
\Pi_{p^2} &=c_{p^2}-c_p^2,\\
\Pi_{p^2q} &=c_{p^2q}-2(c_pc_{pq}+c_qc_{p^2})+3c_p^2c_q,\\
\Pi_{pqr} &=c_{pqr}- 2(c_pc_{qr}+c_qc_{pr}+c_rc_{pq})+6c_pc_qc_r,\\
\Pi_{p^3} &=c_{p^3}- 2c_pc_{p^2}+c_p^3.
\end{align*}
\end{example}

\begin{proposition} Prime elements are abundant in $\M$ in the sense that:
\[\lim_{n\rightarrow \infty} \frac{\Pi_n}{c_n}=1.\] 
\end{proposition}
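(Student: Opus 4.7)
The plan is to show that the correction in the recurrence
\begin{align*}
c_n - \Pi_n \;=\; \sum_{\stackrel{1 < d < n}{d \mid n}} \Pi_d \, c_{n/d}
\end{align*}
established just above is exponentially small compared with $c_n$. Dividing by $c_n$ and using the trivial bound $\Pi_d \leq c_d$ reduces the statement to
\begin{align*}
S_n \;:=\; \sum_{\stackrel{1 < d < n}{d \mid n}} \frac{c_d \, c_{n/d}}{c_n} \longrightarrow 0 \quad\text{as } n\to\infty.
\end{align*}

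Applying the Catalan asymptotic $c_k = C_{k-1} \sim \frac{4^{k-1}}{\sqrt{\pi}\, k^{3/2}}$ (recalled in Section~\ref{FreeMagmas}), a direct computation shows that the polynomial factors telescope exactly, since $d \cdot (n/d) = n$, leaving
\begin{align*}
\frac{c_d \, c_{n/d}}{c_n} \;\sim\; \frac{4^{\,d + n/d - n - 1}}{\sqrt{\pi}} \;=\; O\!\left( 4^{\,d + n/d - n} \right)
\end{align*}
uniformly in $d$.

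The crux is the elementary convexity of $d \mapsto d + n/d$ on the interval $[2,n/2]$: this function is minimized at $d=\sqrt n$ and maximized at the endpoints, so for every divisor $d$ of $n$ with $1 < d < n$ one has $d + n/d \leq 2 + n/2$. Hence each summand in $S_n$ is bounded by $O(4^{\,2 - n/2})$ uniformly in $d$.

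Finally, the number of summands is at most $\tau(n) - 2 \leq n$, so $S_n = O(n \cdot 4^{-n/2}) \to 0$, which gives $\Pi_n/c_n \to 1$. The only step requiring care is the convexity observation, which is what allows one to encode the exponential decay arising from \emph{every} proper divisor by the single worst case $d = 2$; the remaining ingredients are asymptotic bookkeeping and the trivial divisor bound $\tau(n) \leq n$.
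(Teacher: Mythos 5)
Your proof is correct and follows the same overall skeleton as the paper's: both start from $c_n-\Pi_n=(\Pi\star c)_n$, bound $\Pi_d\leq c_d$, estimate each summand of the divisor convolution by a power of $4$ via the Catalan growth rate, and multiply by the trivial count of at most $n$ divisors. The genuine difference is the key inequality controlling the exponent $d+n/d-n$. The paper argues that for $n\geq 16$ every factorization $n=pq$ with $1<p,q<n$ has both factors at most $\lfloor n/2\rfloor$ and at least one factor at most $\lfloor n/3\rfloor$, yielding $p+q-n\leq -n/6$ and the bound $n^3 4^{-n/6-1}$. Your convexity observation --- $d\mapsto d+n/d$ is convex on $[2,n/2]$, hence maximized at the endpoints where it equals $2+n/2$ --- is cleaner, needs no case analysis or threshold on $n$, and gives the sharper decay $4^{-n/2}$. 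The one step you should tighten is the claim that $c_d c_{n/d}/c_n\sim 4^{d+n/d-n-1}/\sqrt{\pi}$ ``uniformly in $d$'': the asymptotic equivalence $c_k\sim 4^{k-1}/(\sqrt{\pi}k^{3/2})$ is a statement as $k\to\infty$ and does not apply uniformly down to $d=2$ (which is in fact your worst case). This is easily repaired by replacing the asymptotics with the two-sided bounds the paper uses, $4^{k}/(k+1)^2<C_k<4^k$, at the cost of a harmless polynomial factor: one gets $c_dc_{n/d}/c_n< n^2\,4^{d+n/d-n-1}\leq n^2\,4^{1-n/2}$, and $S_n=O(n^3 4^{-n/2})\to 0$ all the same. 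With that substitution your argument is complete and slightly stronger than the paper's.
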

\begin{proof} First we calculate the limit $\lim_{n\rightarrow \infty} \frac{(\Pi \star c)_n }{c_n}=0$. Let us observe the following simple fact. If $pq=n$ and $1<p,q <n$, then $p,q \leq \lfloor\frac{n}{2} \rfloor$. In addition, if we suppose that $p,q > \lfloor\frac{n}{3} \rfloor$, then $n=pq> (\lfloor\frac{n}{3}\rfloor )^2$. Since $\lfloor\frac{n}{3}\rfloor \geq \frac{n}{4}$ we have $n> (\frac{n}{4})^2$, that is $n<16$. Therefore, if $n\geq 16$, then either $p \leq \lfloor\frac{n}{3} \rfloor$ or $q \leq \lfloor\frac{n}{3} \rfloor$. 
Now we see the inequalities:
\begin{align*} 0 \leq (\Pi \star c)_n \leq (c\star c)_n = \sum_{\stackrel{1<p,q<n}{pq=n}} c_pc_q \leq  n c_{\lfloor \frac{n}{2}\rfloor} c_{\lfloor \frac{n}{3}\rfloor},
\end{align*}
for any $n\geq 16$, where we have used that $\Pi_n \leq c_n$. Consider the following inequalities for the Catalan numbers, see \cite{Dutton1986Comput}:
\begin{align*} \frac{4^n}{(n+1)\sqrt{\pi n \frac{4n}{4n-1}}}<C_n<\frac{4^n}{(n+1)\sqrt{\pi n \frac{4n+1}{4n}}}.
\end{align*}
Since for $n\geq 4$ we have $\sqrt{\pi n\frac{4n}{4n-1}}\leq n \leq n+1$, we can weaken these bounds as:
\begin{align*} \frac{4^n}{(n+1)^2}<C_n<4^n.
\end{align*}
Therefore, for any $n\geq 16$, and recalling that $c_n=C_{n-1}$ and that $\lfloor x \rfloor \leq x$: 
\begin{align*} 0 \leq \frac{(\Pi \star c)_n}{c_n} \leq  
\frac{n c_{\lfloor \frac{n}{2}\rfloor} c_{\lfloor \frac{n}{3}\rfloor}}{c_n}  \leq n^3 \cdot  4^{\lfloor\frac{n}{2}\rfloor+\lfloor\frac{n}{3}\rfloor-n-1}\leq \frac{n^3}{4^{\frac{n}{6}+1}},
\end{align*}
and taking limits we have that $\lim_{n\rightarrow \infty} \frac{(\Pi \star c)_n}{c_n}=0.$ Finally, from the expression $\Pi_n=c_n - (\Pi\star c)_n$, dividing it by $c_n$ and taking limits we have:
\begin{align*}
\lim_{n\rightarrow \infty} \frac{\Pi_n}{c_n}= 1 - \lim_{n\rightarrow \infty} \frac{(\Pi \star c)_n}{c_n}=1-0=1. \,\,\,\,\,\,\,\,\,\,\,\,\,\,\,\, \qedhere
\end{align*} 
\end{proof}

\section {Submagmas and generators} \label{SubmagmasAndGenerators}

In this section we visit a little zoo of submagmas of $\M$ and sublattices of $\Sub(\M)$. Given a magma $M$ and $N,N' \in \Sub(M)$ we notate $N \vee N'=\langle N\cup N'\rangle$.
Since $\langle \cdot \rangle$ is a closure operator, $(\Sub(M); \vee, \cap)$ is a complete lattice with bounds $\emptyset$ and $M$ (see \cite[p.~21]{sankappanavar1981course} or \cite[p.~48, 147]{davey2002introduction}). Given $N\in \Sub(M)$, $\Sub(N)$ is a sublattice of $\Sub(M)$.

\begin{proposition} Let $N$ be a non-empty submagma of $\M$ and $a\in \M$. 
\begin{align*}
Na \subseteq N,  & \mbox{ but in general } Na \notin \Sub(\M);\\
aN \in \Sub(\M), & \mbox{ but in general } aN\not \subseteq N.
\end{align*}
In addition $aN\cong N$. 
\end{proposition}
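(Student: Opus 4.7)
The plan is to handle the four assertions in turn; all of them reduce quickly to distributivity of $\cdot$ over $+$ together with the basic properties of the product listed right after Definition~\ref{DefProduct} (associativity and left cancellativity).

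For $Na \subseteq N$, the natural argument is induction on $\ell(a)$. When $\ell(a)=1$ we have $a=1$ and $na = n \in N$ for every $n\in N$. For the inductive step, decompose $a = a_1+a_2$; left distributivity gives $na = na_1 + na_2$, the inductive hypothesis places both $na_1$ and $na_2$ in $N$, and closure of $N$ under $+$ then yields $na \in N$. For $aN \in \Sub(\M)$ the argument is a single line of distributivity: if $an_1, an_2 \in aN$ then $an_1 + an_2 = a(n_1+n_2) \in aN$ because $n_1+n_2 \in N$. The isomorphism $aN \cong N$ is then witnessed by the map $\phi : N \to aN$, $\phi(n) = an$; it is a homomorphism by distributivity, surjective by construction, and injective because the product on $\M$ is left cancellative (as noted in Section~\ref{TheArithmetic}).

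For the two ``in general'' clauses explicit counterexamples suffice. Taking $N = \M$ and $a = 2$ gives $Na = \{x+x : x \in \M\}$, which is not closed under $+$: the element $(1+1)+(2+2) = 2 + 2^2$ would have to equal $z+z$ for some $z\in\M$, forcing simultaneously $z=2$ and $z=2^2$, contradicting unique decomposition in $\M$. For the failure of $aN \subseteq N$, take $N = \langle 2 \rangle$ and $a = 3_-$; then $3_- \cdot 2 = 3_- + 3_- \in aN$, but a trivial induction shows that every element of $\langle 2 \rangle$ has even length, so $3_- \notin \langle 2\rangle$, and unique decomposition then rules out $3_-+3_- \in \langle 2\rangle$.

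No step here looks delicate; the only care required is in verifying the counterexamples, where one must invoke unique decomposition (Definition~\ref{CharactFree}) to confirm that the proposed witnesses really escape $\Sub(\M)$ and $N$ respectively.
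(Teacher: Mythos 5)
Your proof is correct and follows essentially the same route as the paper: induction on $\ell(a)$ with left distributivity for $Na\subseteq N$, a one-line distributivity argument for $aN\in\Sub(\M)$, and cancellativity of the product to see that $n\mapsto an$ is an isomorphism onto $aN$. Your counterexamples ($\M\cdot 2$ and $3_-\langle 2\rangle$) differ from the paper's ($\langle 3_+\rangle 2$ and $2\langle 3_+\rangle$) but are equally valid, and you verify them in more detail than the paper does.
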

\begin{proof} We see $Na \subseteq N$ by induction on the length of $a$. When $\ell(a)=1$, $a=1$, and $N\cdot 1=N$. If $\ell(a)>1$ and $xa \in Na$, then we can decompose $a=a'+a''$, and then $xa=xa'+xa''$ is in $N$ ($N$ is a submagma and $xa', xa'' \in N$ by hypothesis of induction). In general $Na$ is not a submagma, e.g. $\langle 3_+ \rangle 2 \notin \Sub(\M)$. Now we see  $aN \in \Sub(\M)$. Let $ax,ay \in aN$. Then $x,y\in N$ which implies that $x+y\in N$, and then $a(x+y)=ax+ay\in aN$. In general $aN\not \subseteq N$, e.g. $2 \langle 3_+ \rangle  \not \subseteq  \langle 3_+ \rangle $.
Finally, we saw in Proposition~\ref{HomoProperties} that $f(x)=ax$ is always a monomorphism, so $f:N \longrightarrow aN$ is an isomorphism. 
\end{proof}

\begin{example} The simplest non-empty submagmas of $\M$ are of the form $a\M = \{ y  \in \M \mid\, a | y\}=\langle a \rangle$. If $ax,ay \in a\M$ then, by the distributivity of the product, $ax+ay=a(x+y) \in a\M$. 

$\M\setminus \{1\}$ is a submagma of $\M$. If $x,y\not=1$, $\ell(x),\ell(y)>1$, and then $\ell(x+y)=\ell(x)+\ell(y)>2$, so $x+y\not=1$. Clearly $\M\setminus \{1\}$ is the unique maximal proper submagma of $\M$. 
\end{example}

\begin{example} [Longitudinal submagmas]
It is trivial to show that if $f: M \longrightarrow N$ is a homomorphism of magmas and $M', N'$ are submagmas of $M$ and $N$ respectively, then $f(M')$ is a submagma of $N$,
and $f^{-1}(N')$ is a submagma of $M$. 
The length $\ell: \M \longrightarrow \mathbb{N}$ is a homomorphism of magmas (from a free magma to a semigroup) which permits defining several submagmas. We say that a submagma $N$ is \emph{longitudinal} iff $N=\ell^{-1}(N_0)$ for some subsemigroup $N_0 \subseteq \mathbb{N}$.  
$\M$ and $\emptyset$ are longitudinal. Longitudinal submagmas form a complete sublattice of $\Sub(\M)$. 

Since $\{ n \in \mathbb{N} \mid n\geq i\}$ is a subsemigroup, $M_i=\ell^{-1}(\{ n \in \mathbb{N} \mid n\geq i\})$ is a submagma. Such submagmas form a complete sublattice of that of the longitudinal submagmas, which is distributive: $M_i\cap M_j=M_{\min\{i,j\}}$, $M_i \vee M_j=M_{\max\{i,j\}}$. 

More interesting are the submagmas $\ell^{-1}(\alpha \mathbb{N})= \{ x\in \M \mid \ell(x)\equiv 0 \,\, \mathrm{mod}(\alpha)\}$, for each $\alpha\geq 1$. Notice that $a\M \subsetneq \ell^{-1}(\ell(a)\mathbb{N})$, and this is the least longitudinal submagma containing $a\M$. This can be generalized:  
given a submagma $N\subseteq \M$ we have that
$N \subseteq \ell^{-1} \ell (N)$,
and $\ell^{-1} \ell (N)$ is the least longitudinal submagma containing $N$. 
Let us see it. Trivially $\ell^{-1} \ell (N)$ is a submagma. We suppose another longitudinal submagma $\ell^{-1}(N_0)$ such that: $N \subseteq \ell^{-1}(N_0) \subseteq \ell^{-1} \ell (N)$. Then $\ell(N) \subseteq \ell\ell^{-1}(N_0) \subseteq \ell\ell^{-1} \ell (N)$, that is $\ell(N)\subseteq N_0 \subseteq \ell(N)$, therefore $\ell(N)=N_0$. 
\end{example}

\begin{definition} Given  a non-empty submagma $N\subseteq \M_X$, consider the mapping $g_N: N \longrightarrow 2^N$ defined recursively as:
\begin{align*}
g_N(z)=\begin{cases} g_N(x) \cup g_N(y) &\mbox{ if } \exists x,y\in N \mbox{ such that } z=x+y; \\
\{z\} & \mbox{ otherwise. }\end{cases} 
\end{align*}
We define $\G(N)=\bigcup_{x\in N} g_N(x)$ when $N$ is non-empty, and $\G(\emptyset)=\emptyset$. 
The \emph{rank} of a submagma $N$ is the number $\rk(N)=|\G(N)|$ when the cardinality is finite. 
\end{definition}

\begin{lemma}  Given a non-empty submagma $N \subseteq \M_X$, $\G(N)$ is the unique minimal generating set of $N$. The pair of mappings $\langle \cdot \rangle: 2^{\M_X} \longrightarrow \Sub(\M_X)$, $\G: \Sub(\M_X) \longrightarrow 2^{\M_X}$ form a Galois connection.
\end{lemma}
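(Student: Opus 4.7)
The plan is to first recast $\G(N)$ more intrinsically. Unfolding the recursion defining $g_N$, and using that $\M_X$ is free, hence unidecomposable (so any decomposition $z=x+y$ with $x,y\in N$ is unique when it exists), one obtains the clean characterization
\begin{align*}
\G(N)=\{\, g\in N \,:\, g\ne a+b \text{ for any } a,b\in N \,\}.
\end{align*}
That is, $\G(N)$ consists of exactly the elements of $N$ that are not further decomposable within $N$; the recursion defining $g_N$ terminates because each split strictly decreases the gradation $\ell$.

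Next I would prove $\langle \G(N)\rangle = N$ by induction on $\ell(x)$ for $x\in N$. The inclusion $\langle \G(N)\rangle\subseteq N$ is immediate from $\G(N)\subseteq N$ and $N\in\Sub(\M_X)$. For the reverse, fix $x\in N$: either $x$ is not $N$-decomposable, so $x\in \G(N)$, or $x=a+b$ with $a,b\in N$ of strictly smaller length, whence $a,b\in \langle \G(N)\rangle$ by the inductive hypothesis and therefore $x=a+b\in\langle \G(N)\rangle$.

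Minimality and uniqueness follow from the stronger claim that $\G(N)\subseteq X$ for \emph{every} generating set $X$ of $N$. Given $g\in \G(N)$, we have $g\in N = \langle X\rangle$, so, by the inductive description of $\langle X\rangle$, either $g\in X$ (and we are done) or $g=a+b$ with $a,b\in \langle X\rangle = N$; the second alternative contradicts the intrinsic characterization from Step~1, so $g\in X$. Since $\G(N)$ itself generates $N$, no proper subset of it can do so, and combined with the inclusion just established this forces $\G(N)$ to be the least --- and therefore the unique minimal --- generating set of $N$.

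The Galois-connection claim then reduces, by inspection, to the two compatibility identities $\langle \G(N)\rangle = N$ (Step~2) and $\G(\langle X\rangle)\subseteq X$ (Step~3 applied with $X$ viewed as a generating set of $\langle X\rangle$). The main --- and only delicate --- obstacle is the contradiction in Step~3: it is precisely the unidecomposability of the free magma $\M_X$ that prevents an element $g\in \G(N)$ from being concealed as an internal $+$-node of some term over $X$; without it, a purported $\G(N)$-element could admit several competing decompositions and the contradiction would fail.
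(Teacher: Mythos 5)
Your treatment of the first claim is correct and essentially the paper's own argument: the intrinsic characterization $\G(N)=\{\,g\in N \mid g\neq a+b \mbox{ for any } a,b\in N\,\}$ is exactly what the recursion for $g_N$ unfolds to (well-definedness and termination coming from unidecomposability and the gradation, as you note), $\langle \G(N)\rangle=N$ is proved the same way, and your Step~3 --- every generating set of $N$ contains $\G(N)$, hence $\G(N)$ is the least and therefore the unique minimal generating set --- is in fact a slightly sharper rendering of the paper's remark that ``in order to generate an element $x$ we need the additive factors given by $g_N(x)$''. Unidecomposability enters exactly where you say it does.

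The Galois-connection step, however, does not reduce to the two compatibility identities alone, and here your argument (which coincides with the paper's own) has a genuine gap. From $\G(\langle A\rangle)\subseteq A$ and $\langle \G(M)\rangle=M$ one obtains $\G(M)\subseteq A \implies M\subseteq \langle A\rangle$ by monotonicity of $\langle\cdot\rangle$; but the converse implication $M\subseteq\langle A\rangle \implies \G(M)\subseteq A$ would require $\G$ to be monotone with respect to inclusion, and it is not: for $M=2^2\M \subseteq 2\M=\langle \{2\}\rangle$ one has $\G(M)=\{2^2\}\not\subseteq\{2\}$. So the biconditional $M\subseteq\langle A\rangle \iff \G(M)\subseteq A$ fails, and the Galois-connection claim as stated is false; only the one-sided implication survives. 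Since you reproduced the paper's reasoning here, the defect lies in the statement itself rather than in anything peculiar to your write-up, but the reduction ``by inspection'' is precisely the step that breaks down.
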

\begin{proof} $g_N(z)$ is well defined since the $x,y\in N$ in the decomposition $x+y=z$ are unique. In addition, $\ell_X(y),\ell_X(z) <\ell_X(x)$, so the recursion must finish at some point.
We see that $\langle \G(N) \rangle=N$. $(\subseteq)$ $\G(N)\subseteq N$, then $\langle \G(N) \rangle \subseteq N$. $(\supseteq)$ For each $x\in N$, $x\in \langle g_N(x)\rangle$, so $N  \subseteq \bigcup_{x\in N} \langle g_N(x) \rangle \subseteq \langle \bigcup_{x\in N} g_N(x) \rangle =\langle \G(N) \rangle$.

We see that $\G(N)$ is minimal. If we had $a\in \G(N)$ such that $\langle \G(N)\setminus \{a\}\rangle=N$, then $a$ should be generated in $\G(N)\setminus \{a\}$, and in particular there would be some $x,y\in N$ such that $a=x+y$. However, by assumption $a\in \G(N)$ and, by definition of $g_N$, this decomposition is not possible. Hence, $\G(N)$ is minimal. 

Finally, we see that $\G(N)$ is the unique minimal generating set. Suppose that $\langle A\rangle = N$ for some $A \subseteq N$. Notice that $\G(N) \subseteq A$  (this is because in order to generate an element $x$ we need the additive factors given by $g_N(x)$). However, since  $\G(N)$ is minimal, and we have supposed $A$ is, $A=\G(N)$. 

Using that $\langle \G(N)\rangle =N$ and that $\G(\langle A \rangle ) \subseteq A$, we have that $M \subseteq \langle A \rangle$ iff $\G(M) \subseteq A$ which is the definition of a Galois connection, see \cite{davey2002introduction}.
\end{proof}

\begin{lemma} \label{SubmagmaCondition} Given a non-empty submagma $N \subseteq \M_X$ we have that $x+y \in N$ iff either $x,y\in N$ or $x+y\in \G(N)$.
\end{lemma}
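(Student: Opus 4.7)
The plan is to prove both directions separately, with the forward direction being the only one that requires any real argument. The backward direction is immediate: if $x,y \in N$, then $x+y \in N$ because $N$ is a submagma; and if $x+y \in \G(N)$, then $x+y \in N$ simply because $\G(N) \subseteq N$ by the construction of $g_N$.

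For the forward direction, assume $x+y \in N$ and consider the value of $g_N(x+y)$. By definition, $g_N$ branches on whether there exist elements $u,v \in N$ such that $x+y = u+v$. Now I would invoke unidecomposability of $\M_X$: since $\M_X$ is a free magma, any equation $x+y = u+v$ forces $x = u$ and $y = v$. Therefore, the existence of such a pair $(u,v) \in N \times N$ is equivalent to $x,y \in N$ themselves.

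This gives the clean dichotomy I need. If $x,y \in N$, we are in the first case of the definition of $g_N$ and we are done. Otherwise — that is, if at least one of $x, y$ lies outside $N$ — then no decomposition of $x+y$ inside $N$ exists, so by definition $g_N(x+y) = \{x+y\}$. Since $x+y \in N$ by hypothesis, applying $g_N$ to $x+y$ itself places $x+y$ into $\G(N) = \bigcup_{z \in N} g_N(z)$, which yields $x+y \in \G(N)$. Combining both cases completes the proof.

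The only step that could feel subtle is the use of unidecomposability, but it is a direct application of the fact that $\M_X$ is free — the pair $(x,y)$ determined by the sum $x+y$ is unique in the ambient magma, and in particular unique if we restrict the witnesses to belong to $N$. No induction on length is necessary.
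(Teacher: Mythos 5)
Your proof is correct and follows essentially the same route as the paper: the backward direction is immediate, and the forward direction uses unidecomposability of $\M_X$ to see that the definition of $g_N$ falls into its ``otherwise'' branch whenever $x$ or $y$ lies outside $N$, forcing $g_N(x+y)=\{x+y\}$ and hence $x+y\in\G(N)$. The paper states this more tersely, but your explicit appeal to uniqueness of decomposition is exactly the justification it relies on.
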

\begin{proof}
The direction $(\Leftarrow)$ is trivial. For the direction $(\Rightarrow)$ we take $x+y\in N$ and we suppose that either $x$ or $y$ is not in $N$. Then, by definition of $g_N$, we have $g_N(x+y)=\{x+y\}$, so $x+y \in \G(N)$, and we are done. Notice that we saw this as a particular case in Theorem~\ref{LemmaDivisio}(1), by putting $x+y \in a\M$ instead of $a|x+y$.
\end{proof}

\begin{example} \label{ExampleMaximalSubmagma}Consider again the maximal proper submagma $\M\setminus \{1\}$. By applying $g_N$ we can find the minimal generating set. We have that an element $z\in N=\M\setminus \{1\}$ can be decomposed into $z=x+y$ with $x,y \in N$ provided that $\ell(x),\ell(y)\geq 2$. The elements which cannot be decomposed (and thus they are generators, $g_N(z)=\{z\}$) are those such that $\ell(x)=1$ or $\ell(y)=1$, that is, elements of the form $1+\M$ or $\M+1$:
\begin{align*}
\G(\M\setminus \{1\})=(1+\M)\cup (\M+1),
\end{align*}
and therefore, $\M\setminus \{1\}$ has infinite rank.
\end{example}

\begin{example} The generator set of $\ell^{-1}(\alpha \mathbb{N})$ can be calculated as: 
\begin{align*}
\G(\ell^{-1}(\alpha \mathbb{N}))=\bigcup_{ \stackrel{\alpha | p+q,} { \alpha  {\not \, |} \, p, \, \alpha {\not \, | } \, q}} (\M)_p+(\M)_q.
\end{align*}
In particular for $\alpha=2$:
{\small \begin{align*} 
\G(\ell^{-1}(2 \mathbb{N}))&=(\M)_1+(\M)_1 \\
&\cup\,\, (\M)_1+(\M)_3 \,\, \cup \,\, (\M)_3+(\M)_1 \\
&\cup\,\, (\M)_1+(\M)_5\,\,\cup \,\,(\M)_3+(\M)_3 \,\,\cup\,\, (\M)_5+(\M)_1 \\
&\cup\,\, (\M)_1+(\M)_7\,\,\cup \,\,(\M)_3+(\M)_5 \,\,\cup \,\,(\M)_5+(\M)_3 \,\,\cup \,\,(\M)_7+(\M)_1\\
&\,\,\,\vdots
\end{align*}}
\end{example}

\begin{example} [Symmetric submagmas] We define recursively the \emph{symmetric element of} $x$, notated $\overline{x}$,  by $\overline{1}=1$, and $\overline{x+y}=\overline{y}+\overline{x}$. Thus, for example, $\overline{n_+ }=n_-$, $\overline{n_- }=n_+$.
We have that $\overline{xy}=\overline{x} \, \overline{y}$ and that every antimorphism ($f(x+y)=f(y)+f(x)$) is of the form  $f(x)=a\overline{x}$, for some $a\in \M$. 

If $N\subseteq \M$ is a submagma, $\overline{N}$ is a submagma, and $\rk(\overline{N})=\rk(N)$. We say that a submagma $N\subseteq \M$ is \emph{symmetric} iff $\overline{N}=N$. 
 Since $\overline{N \vee M}=\overline{N} \vee \overline{M}$ and $\overline{N\cap M}=\overline{N}\cap \overline{M}$ the set of symmetric submagmas is a complete sublattice. Every symmetric submagma is of the form $N \vee \overline{N}$.
An \emph{strongly symmetric submagma} is a $\vee$-sum (maybe empty) of submagmas of the form $2^n\M$. 
Every symmetric submagma $N$ can be decomposed as
$N=S \vee T$ with $S\cap T=\emptyset$, where $T$ is a strongly symmetric submagma, and $S$ is symmetric but does not contain any strongly symmetric submagma. When $N$ has finite rank, $\rk(S)$ is even, i.e.:
\begin{align*}
N= S\vee T= \langle a_1, \overline{a_1},  a_2, \overline{a_2},  \ldots, a_r, \overline{a_r} \rangle \vee \langle  2^{n_1}, \ldots, 2^{n_s} \rangle,
\end{align*}
for some integers $r,s\geq 0$. 
\end{example}

\begin{example} We can see that $\rk(\langle a,b\rangle)=2$ iff $a {\not |} \, b$ and $b {\not |} \, a$. $(\Rightarrow)$ if $a |b$ then $b=ax$ for some $x \in \M$. Then, $\langle a,b \rangle=\langle a,ax \rangle=a \langle 1, x \rangle= a \langle 1 \rangle =\langle a \rangle$ which is a contradiction. Similarly, if $b|a$. $(\Leftarrow)$ if $\rk(\langle a,b \rangle)<2$, $\langle a,b \rangle=\langle c \rangle$ for some $c\in \M$. On the one hand, since  $\langle a,b \rangle \subseteq \langle c \rangle$, there are some $x,y\in \M$ such that $a=cx, b=cy$. On the other hand, $\langle a,b \rangle \supseteq \langle c \rangle$, which means that there is some combination of $a$'s and $b$'s equalling $c$, and then $\ell(c)=p\ell(a)+q\ell(b)$ for some integers $p,q \geq 0$. Therefore, we have:  
\begin{align*}
\ell(c)=p\ell(cx)+q\ell(cy)=p\ell(c)\ell(x)+q\ell(c)\ell(y) \implies 1=p\ell(x)+q\ell(y).\end{align*} 
This implies that either $a=cx=c1=c$ or $b=cy=c1=c$. In the first case  $a|b $, in the second one, $b|a$. 
\end{example}

\begin{lemma} \label{LemmaS} Each subset of the set $S=\{ 2, 3_+,4_+,5_+, \ldots \}$ is a minimal generating set. 
\end{lemma}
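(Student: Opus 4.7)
The plan is to reduce the claim to showing that each single element $a\in S$ fails to lie in the submagma generated by $S\setminus\{a\}$. Indeed, if $A\subseteq S$ is any subset and $a\in A$, then monotonicity of $\langle\cdot\rangle$ gives $\langle A\setminus\{a\}\rangle \subseteq \langle S\setminus\{a\}\rangle$, so $a\notin \langle S\setminus\{a\}\rangle$ already implies $a\notin \langle A\setminus\{a\}\rangle$, which is exactly the minimality condition for the chosen subset.

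To carry this out, I would first record two elementary observations. The first is a length bound: every generator in $S$ has length at least $2$, so every element of $\langle S\setminus\{a\}\rangle$ has length at least $2$; in particular $1\notin \langle S\setminus\{a\}\rangle$. The second is a structural fact about the shape of the generators: by the recursive definition $1_+=1$, $(n+1)_+=1+n_+$, every $a\in S$ admits the decomposition $a=1+b$ in $\M$, with $b=1$ if $a=2$ and $b=(n-1)_+$ if $a=n_+$ for $n\geq 3$.

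With these in hand, I would conclude by contradiction: assume $a\in N:=\langle S\setminus\{a\}\rangle$. Applying Lemma~\ref{SubmagmaCondition} to $a=1+b\in N$, either $1,b\in N$ or $a\in \G(N)$. The first option is ruled out by the length bound, since $1\notin N$. Hence $a\in\G(N)$; but the Galois connection between $\G$ and $\langle\cdot\rangle$ gives $\G(N)=\G(\langle S\setminus\{a\}\rangle)\subseteq S\setminus\{a\}$, forcing $a\in S\setminus\{a\}$, which is absurd.

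I do not expect any real obstacle here; the crux is simply that every element of $S$ begins its canonical decomposition with a leaf $1$, which cannot itself be reconstructed from the other generators because all of them have length $\geq 2$. Unidecomposability of $\M$ is absorbed into the use of Lemma~\ref{SubmagmaCondition}, and the Galois connection turns the minimality question into a straightforward set-theoretic contradiction.
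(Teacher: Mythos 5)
Your proof is correct and follows essentially the same route as the paper: both arguments rest on the unique decomposition $a=1+b$ of each generator of $S$, on Lemma~\ref{SubmagmaCondition}, and on the fact that $1$ cannot lie in $\langle S\setminus\{a\}\rangle$ because every generator there has length at least $2$. The only difference is a slight streamlining: the paper splits $S'\setminus\{r_+\}$ into the generators shorter and longer than $r_+$ and treats the two parts by separate length arguments, whereas your appeal to $\G(\langle X\rangle)\subseteq X$ from the Galois connection disposes of the case $a\in\G(N)$ uniformly, with no case split needed.
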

\begin{proof} Consider $S'\subseteq S$, and $r_+ \in S'$. We prove that $r_+$ cannot be generated in $S'\setminus \{r_+\}$, which implies that $r_+$ cannot be generated in any subset $S'' \subseteq S'\setminus \{r_+\}$, and therefore, $S'$ is minimal. We decompose $S'\setminus \{r_+\}$ into two complementary subsets:
\begin{align*}
A_r=\{ n_+ \in   S'\setminus \{r_+\} \mid 1<n<r\} \mbox{ and } B_r=\{ n_+ \in   S'\setminus \{r_+\} \mid n>r\}.
\end{align*}
Let us see that $r_+\not\in \langle A_r \rangle$. 
Clearly $r_+\notin A_r$. Now we suppose that $r_+\in \langle A_r \rangle \setminus A_r$. By Lemma~\ref{SubmagmaCondition} there are $x,y \in \langle A_r  \rangle$ such that $r_+=x+y$. Since $r\geq 2$, $r_+=1+(r-1)_+=x+y$ which means that $x=1 \in \langle A_r \rangle$ which is absurd by definition of $A_r$. 

Now we see that if we add some elements from $B_r$, they do not contribute to generate $r_+$. This is simply because for any element $x\in B_r$, $\ell(x)>r=\ell(r_+)$, so any combination of elements of $B_r$ and elements of $A_r$ will have length $>r$. So $r_+\not \in \langle A_r \cup B_r \rangle =\langle S' \setminus \{r_+\} \rangle.$
Since this holds for any $r_+ \in S'$ and any $S' \subseteq S$, the statement is proved. 
\end{proof}

We define the following submagmas:
\begin{align*}
\N_n^+=\langle 2, 3_+,4_+,5_+, \ldots, n_+\rangle \mbox{ and } \N_\infty^+=\langle 2, 3_+,4_+,5_+, \ldots \rangle.
\end{align*}
By the Lemma~\ref{LemmaS}, $\rk(\N_n^+)=n-1$ and $\rk(\N_\infty^+)=\infty$.  
One can compare the following theorem to that of Nielsen-Schreier \cite{stillwell1993Classical} which states that free groups only contain free subgroups. It is also well-known that free groups of rank at least two contain any other countably generated free group. Similar statements are possible regarding the cyclic free magma. 

\begin{theorem} \label{NonCountability} Each submagma of $\M$ is free, and $\M_X$ can be embedded in $\M$ for any countable set $X$. In addition, the set of submagmas $\Sub(\M)$ is not countable.
\end{theorem}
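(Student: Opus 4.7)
The statement bundles three claims, each of which I would prove by direct appeal to results already established.

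For the first claim, given any submagma $N\subseteq \M$, the length $\ell$ restricted to $N$ is a gradation on $N$: it takes values in $\mathbb{N}$, and the additive identity $\ell(x+y)=\ell(x)+\ell(y)$ already holds in $\M$, so it holds in $N$, which is closed under $+$. Unidecomposability is inherited verbatim from $\M$: an equation $x+y=x'+y'$ holding in $N$ is still an equation in $\M$, hence $x=x'$ and $y=y'$. The characterization of free magmas as exactly the graded and unidecomposable ones (proved earlier) then gives that $N$ is free.

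For the embedding of $\M_X$ when $X$ is countable, I would deploy Lemma~\ref{LemmaS}, which guarantees that every subset of $S=\{2,3_+,4_+,\ldots\}$ is a minimal generating set for what it generates. Pick $S'\subseteq S$ with $|S'|=|X|$. Then $\langle S'\rangle$ is a submagma of $\M$, hence free by the first claim, and its unique minimal generating set, obtained from the Galois connection, is $\G(\langle S'\rangle)=S'$; thus its rank equals $|X|$. Since free magmas of equal rank are isomorphic (noted earlier after Definition~\ref{FreeMagma}), we get $\langle S'\rangle\cong \M_X$ and hence an embedding $\M_X\hookrightarrow\M$.

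For the uncountability of $\Sub(\M)$, the same mechanism supplies the conclusion at no extra cost: the assignment $S'\mapsto\langle S'\rangle$ is a map $2^S\to\Sub(\M)$, and it is injective because $\G(\langle S'\rangle)=S'$ recovers $S'$ from $\langle S'\rangle$, so distinct subsets of $S$ produce distinct submagmas. Since $|2^S|=2^{\aleph_0}$, the lattice $\Sub(\M)$ is uncountable.

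There is no real obstacle here; everything has been teed up by the earlier characterization theorem and by Lemma~\ref{LemmaS}. The one point deserving care is in the third claim: one must use not merely that $\langle S'\rangle$ is a submagma, but that $S'$ is its \emph{unique} minimal generating set, since otherwise different $S'\subseteq S$ could in principle produce the same submagma. Minimality from Lemma~\ref{LemmaS}, combined with uniqueness of minimal generating sets, rules this out.
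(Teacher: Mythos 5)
Your proposal is correct and follows essentially the same route as the paper: the first claim via the graded-and-unidecomposable characterization with the inherited gradation, and the second and third claims via Lemma~\ref{LemmaS} together with the uniqueness of minimal generating sets (the paper uses the specific submagmas $\N_{n+1}^+$ and $\N_\infty^+$ and the complement-based injection $X\mapsto\langle S\setminus\varphi(X)\rangle$, but these are only cosmetic differences from your $S'\mapsto\langle S'\rangle$). Your closing remark about needing \emph{uniqueness} of the minimal generating set to get injectivity is exactly the point the paper's proof also relies on.
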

\begin{proof}  By Theorem~\ref{CharactFree} a magma is free iff it is graded and unidecomposable. This occurs for each submagma $N\subseteq \M$ since the operation in $N$ is the same that $\M$ and we can inherit the gradation $\ell$ restricted to $N$.  

Let $\M_X$ be a free magma. If $X$ is finite, with $|X|=n$, we take a bijection $X \longrightarrow \G(\N_{n+1}^+)$; if $X$ is not finite we take a  bijection $X \longrightarrow \G(\N_{\infty}^+)$. These mappings extend to the isomorphisms $\M_X \longrightarrow \N_{n+1}^+$ and $\M_X \longrightarrow \N_{\infty}^+$, respectively.

For the third statement we define the mapping $\varphi: \mathbb{N} \longrightarrow S$, $\varphi(n)=(n+1)_+$. With this we define the mapping:
\begin{align*}
2^\mathbb{N} &\longrightarrow \Sub(\M)\\
X &\longmapsto \langle S \setminus \varphi (X) \rangle,
\end{align*}
and we only have to prove that it is an injective mapping. If $\langle S \setminus \varphi(X) \rangle=\langle S \setminus \varphi(Y) \rangle$ then $\G(\langle S \setminus \varphi(X) \rangle)=\G(\langle S \setminus \varphi(Y) \rangle)$. By Lemma~\ref{LemmaS} we have that $S \setminus \varphi(X)$ and S $\setminus \varphi(Y)$ are minimal generating sets which means that $S \setminus \varphi(X)=S \setminus \varphi(Y)$, and then $\varphi(X)=\varphi(Y)$. Since $\varphi$ is injective, $X=Y$. 
\end{proof}

\begin{example} \label{GeneratorProduct} We saw in Example~\ref{CartesianProd} that the direct product of two free magmas is a free magma. The generator set of $\M^2=\M\times\M$ can be calculated as $\G(\M^2)=(\{1\} \times \M) \cup (\M\times \{1\})$, and then it has infinite rank. There is a natural isomorphism  
$f: \M^2 \longrightarrow \M\setminus \{1\}$ 
defined over the generators by $f((1,y))=1+y$ and $f((x,1))=x+1$. But  notice that in general $f((x,y))\not=x+y$; for example, $f((2^2,2))=3_-\cdot2\not=2^2+2=2\cdot3_-$. 

More in general, $\G(\M_X \times \M_Y)=(\M_X\times Y) \cup (X\times \M_Y) $. By Theorem~\ref{NonCountability}, we have the isomorphism: 
\begin{align*}
\M_X \times \M_Y \cong \M_{(\M_X\times Y) \cup (X\times \M_Y) }.
\end{align*}
\end{example}

\begin{note} Since every free magma $\M_X$ (even, every finite direct product $\M_X\times \M_Y$) inhabits in $\M$, for a lot of properties on free magmas, the study of the cyclic free magma $\M$ suffices. 
\end{note}

\section{Ideals and principal ideals} \label{IdealsAndPrincipalIdeals}

Given a submagma $N \subseteq \M$, we say that it is a \emph{(right multiplicative) ideal} iff for each $a\in N$ and for each $x \in \M$ we have $ax \in N$. 
It is trivial to show that $a\M=\langle a \rangle$ is an ideal of $\M$, which we call \emph{principal ideal}. Thus, every submagma $N=\langle A \rangle$ can be put as a $\vee$-sum of principal ideals: 
$N = \bigvee_{a\in A} a\M$.
It turns out that in $\M$ all submagmas are straightforward ideals, though, non necessarily principal. 

\begin{proposition} Each submagma of $\M$ is an ideal. 
\end{proposition}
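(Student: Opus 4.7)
The plan is to fix an arbitrary submagma $N \subseteq \M$, an element $a \in N$, and an arbitrary $x \in \M$, and prove that $ax \in N$ by induction on $\ell(x)$. The mechanism is purely the left-distributivity of $\cdot$ over $+$ in Definition~\ref{DefProduct}, together with the gradation property $\ell(y+z) = \ell(y) + \ell(z)$ which guarantees that a decomposition of $x$ yields strictly shorter pieces. I expect nothing subtle here; the argument is essentially one line once the induction is set up properly.

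For the base case $\ell(x)=1$ we have $x=1$, hence $ax = a\cdot 1 = a \in N$ by the definition of the product and the assumption that $a \in N$. For the inductive step, suppose $\ell(x) > 1$. Since $\M$ is unidecomposable and $\ell(x) \geq 2$, we may write $x = y+z$ uniquely with $y,z \in \M$ and $\ell(y),\ell(z) < \ell(x)$. By left-distributivity we obtain
\begin{equation*}
ax = a\cdot(y+z) = ay + az.
\end{equation*}
The inductive hypothesis applies to both $y$ and $z$ (for the same $a \in N$), giving $ay \in N$ and $az \in N$. Since $N$ is a submagma, it is stable under $+$, and therefore $ax = ay + az \in N$, completing the induction.

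The main potential obstacle, such as it is, would be if one mistakenly tried to induct on $\ell(a)$ or on $\ell(ax)$; but inducting on $\ell(x)$ makes the decomposition $x = y+z$ strictly reduce the parameter, and the distributivity axiom does the rest. Note that this argument uses only the left-distributivity available in $\M$, which is consistent with the parenthetical remark after Definition~\ref{DefProduct} that $\M$ is only a \emph{left} semi-ringoid, and with the fact that ideals here are defined on the right (i.e.\ by closure under $a \mapsto ax$), so the asymmetry matches.
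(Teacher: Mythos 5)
Your proof is correct and follows exactly the same route as the paper's: induction on $\ell(x)$, with the base case $a\cdot 1 = a$ and the inductive step via the decomposition $x=y+z$, left-distributivity, and closure of $N$ under $+$. Nothing to add.
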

\begin{proof} Given $N\in \Sub(\M)$ we pick $a\in N$. For every $x \in \M$, we prove by induction on the length of $x$ that $ax\in N$. If $\ell(x)=1$  then $x=1$, and $ax=a1=a\in N$. For the general case $\ell(x)>1$ we can decompose $x=y+z$ with $\ell(y),\ell(z)<k$. Then $ax=a(y+z)=ay+az$. Since $ay,az \in N$ by hypothesis of induction and $N$ is a submagma, $ax \in N$. 
\end{proof}

In the ring $\mathbb{Z}$ the identity of Bezout states that $a\mathbb{Z} + b\mathbb{Z}=\gcd(a,b)\mathbb{Z}$. In $\M$ we enjoy a less restricted version. 
Let us see how principal ideals behave under $\subseteq, \cap$ and $\vee$.

\begin{theorem} \label{PrincipalIdeals} We have that:
\begin{enumerate}
\item $a\M \subseteq b\M \iff b | a$.
\item $a\M \cap b\M =\begin{cases} b\M & \mbox{ if } a|b, \\
a\M & \mbox{ if } b|a, \\
\emptyset &\mbox{ otherwise.}  \end{cases}$
\item 
$a\M \vee b\M \subseteq\gcd(a,b)\M$ and $\gcd(a,b)\M$ is the least principal ideal containing $a\M \vee b\M$.
\end{enumerate}
\end{theorem}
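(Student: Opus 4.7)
The plan is to handle the three items in order, exploiting that $(\M;\cdot)$ is a free monoid on the prime set $\Prime$, together with the unique factorization and left-cancellation properties already established.

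For (1), I would use pure divisibility manipulations. For $(\Leftarrow)$, if $b\mid a$ then $a=bc$ for some $c\in\M$, and distributivity gives $ax=b(cx)\in b\M$ for every $x\in\M$, so $a\M\subseteq b\M$. For $(\Rightarrow)$, note that $a=a\cdot 1\in a\M\subseteq b\M$, so $a=by$ for some $y$, i.e.\ $b\mid a$. No subtlety here.

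For (2), the two easy cases follow immediately from (1): if $a\mid b$ then $b\M\subseteq a\M$, hence the intersection is $b\M$, and symmetrically. The main step is the third case. Suppose $z\in a\M\cap b\M$, so that $z=ax=by$. Because $(\M;\cdot)\cong(\Prime^*;\cdot)$ is a free monoid, the set of left divisors of $z$ is totally ordered by divisibility: factoring $z$ uniquely into primes $z=p_1p_2\cdots p_n$, every left divisor of $z$ is a prefix $p_1\cdots p_k$ for some $k$. In particular $a$ and $b$, both being left divisors of $z$, must satisfy either $a\mid b$ or $b\mid a$. The contrapositive of this observation is exactly the third case: if neither divides the other then no common left multiple exists, so $a\M\cap b\M=\emptyset$.

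For (3), let $d=\gcd(a,b)$. Then $d\mid a$ and $d\mid b$, and by (1) we obtain $a\M\subseteq d\M$ and $b\M\subseteq d\M$; since $d\M$ is a submagma this gives $a\M\vee b\M\subseteq d\M$. For minimality, suppose $c\M$ is a principal ideal with $a\M\vee b\M\subseteq c\M$. Then in particular $a\in c\M$ and $b\in c\M$, so $c\mid a$ and $c\mid b$. By the defining property of the greatest common (left) divisor, $c\mid d$, and invoking (1) once more yields $d\M\subseteq c\M$, as required.

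The only delicate point is the emptiness assertion in (2); everything else is essentially the translation between divisibility statements about $a,b$ and containment statements about $a\M,b\M$ provided by (1). The emptiness step relies crucially on the fact, implicit in the preceding section, that the multiplicative structure of $\M$ is that of a free monoid, so that ``prefixes'' (left divisors) form a chain—a property that fails in a generic monoid and is what makes principal ideals in $\M$ behave so much more rigidly than, say, in $\mathbb{Z}$.
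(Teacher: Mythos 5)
Your proof is correct. Items (1) and (2) follow the paper's route in substance: for (2) the paper invokes the stated equidivisibility of the product monoid to pass from $ax=by$ to ``$a\mid b$ or $b\mid a$'', whereas you derive the same chain property of left divisors from $(\M;\cdot)\cong(\Prime^*;\cdot)$ and unique factorization; these are two packagings of the same fact, both quoted without proof from Section~\ref{TheArithmetic}. (In (1), the step $ax=b(cx)$ is associativity of the product, not distributivity, but the argument stands.) The genuine divergence is in the minimality half of (3). The paper only shows that no principal ideal $d'\M$ can be properly interposed between $a\M\vee b\M$ and $d\M$, via a case analysis through Theorem~\ref{LemmaDivisio}(1): $d'\mid(ax+by)$ forces either $d'\mid ax$ and $d'\mid by$, or $d'=ax+by$, the latter being ruled out by varying $x,y$. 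You instead prove the stronger and cleaner statement directly: any principal ideal $c\M\supseteq a\M\vee b\M$ satisfies $c\mid a$ and $c\mid b$ (take $x=y=1$), hence $c\mid\gcd(a,b)$ and so $d\M\subseteq c\M$ by (1). Your version delivers the word ``least'' literally and without case analysis, at the price of using the universal property of the gcd ($c\mid a$ and $c\mid b$ imply $c\mid\gcd(a,b)$), which the paper never states explicitly but which follows at once from the chain structure of left divisors that you already exploited in (2); one sentence making that dependency explicit would close the argument completely.
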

\begin{proof} (1) $a\M \subseteq b\M$ iff for each $ax\in a\M$ there is a $by \in b\M$ such that $ax=by$. Since this must be satisfied for any $x$, necessarily $b$ divides $a$. 

(2) an element in $a\M\cap b\M\not=\emptyset $ must satisfy that $ax=by$ for some $x,y\in \M$. Then by the equidivisibility property we have that either $a|b$ or $b|a$. In the first case $b\M \subseteq a\M$, so $a\M \cap b\M=b\M$. The case $b|a$ is symmetric. 

(3) let $d=\gcd(a,b)$. We factorize $a=da'$, and $b=db'$ for some $a',b'\in \M$. We have $a\M\vee b\M=da'\M \vee db'\M=d(a\M \vee b\M) \subseteq d\M$. Now we have to see that if we have the inclusions: $a\M \vee b\M \subseteq d'\M \subseteq d\M \mbox{ then } d'\M=d\M$. 
From the first inclusion we have that for any $x,y\in \M$ there must be a $z\in \M$  such that $ax+by=d'z$, i.e. $d'|(ax+by)$. But by Theorem~\ref{LemmaDivisio}(1) we know that either $d'|ax$ and $d'|bx$ or $d'=ax+by$. In the first case we observe that $d'|\gcd(a,b)=d$. However, from the second inclusion, we have that $d|d'$, whereby $d=d'$ and we are done. In the second case we have $d'=ax+by$ but notice that this must be satisfied for any other different $x',y'\in \M$. So $d'=ax'+bx'$, whereby $ax+by=ax'+by'$, and then $ax=ax', bx=bx'$ which implies that $x=x', y=y'$ which is absurd. 
\end{proof}
\begin{note}
The points (1) and (2) say that the set of principal ideals is a semi-lattice with the intersection (we assume that $\emptyset$ is principal). In particular, (1) implies that a principal ideal is maximal in this semilattice iff it is of the form $p\M$ with $p\in\Prime$. 
This result is also proposed in \cite[p.~161]{Bourbaki1989Algebra} without defining a product operation, where prime elements are called \emph{primitives} and the principals ideals are called \emph{monogenous}. By our approach we have preferred to highlight the arithmetical nature of $\M$.   
\end{note}

\begin{corollary} For any set $X$, $\Sub(\M_X)$ is not modular. 
\end{corollary}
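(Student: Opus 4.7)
The plan is to exhibit a concrete triple of submagmas $(A, B, C)$ of $\M$ with $A \subseteq B$ for which the modular identity $A \vee (B \cap C) = (A \vee C) \cap B$ fails. The extension to arbitrary $\M_X$ is immediate: for any nonempty $X$ and any $x \in X$, the submagma $\langle x \rangle \subseteq \M_X$ is a free magma on one generator, hence isomorphic to $\M$ (by Theorem~\ref{NonCountability} together with the classification of free magmas up to generator-set bijection), so $\Sub(\langle x \rangle)$ embeds as a sublattice of $\Sub(\M_X)$ and non-modularity transfers.

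My candidates use the two distinct length-$3$ primes $3_+$ and $3_-$ of $\M$ (both prime since $3$ is prime, by the remark preceding Proposition~\ref{CountingPrimes}). Setting $v := 3_+ + 3_-$, I take
\[A := \langle 3_+ \rangle, \qquad B := \langle 3_+,\, v \rangle, \qquad C := \langle 3_- \rangle.\]
The easy verifications: $A \subsetneq B$ because $v \notin A = 3_+\M$, since the unique length-$6$ left-multiple of $3_+$ is $3_+ \cdot 2 = 3_+ + 3_+ \neq 3_+ + 3_- = v$; the joins coincide since $v = 3_+ + 3_- \in \langle 3_+, 3_-\rangle$, giving $A \vee C = \langle 3_+, 3_-\rangle = B \vee C$; and $A \cap C = 3_+\M \cap 3_-\M = \emptyset$ by Theorem~\ref{PrincipalIdeals}(2), since distinct primes are incomparable under division.

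The main obstacle is showing $B \cap C = \emptyset$. I first identify $\G(B) = \{3_+, v\}$: this set generates $B$ by construction, and is minimal because $v \notin \langle 3_+\rangle$ (the unique length-$6$ left-multiple of $3_+$ is $3_+ + 3_+ \neq v$) and $3_+ \notin \langle v\rangle = v\M$ (length $3$ is not a multiple of $6$). Then I prove $B \cap C = \emptyset$ by induction on the length of $c \in C = 3_-\M$. The base $c = 3_-$ is not in $B$, because the only length-$3$ element of $B$ comes from $\G(B)$ and is $3_+$. For the inductive step, write $c = 3_- a_1 + 3_- a_2$ with $3_- a_i \in C$ of smaller length; by the hypothesis neither summand lies in $B$, so Lemma~\ref{SubmagmaCondition} forces $c \in \G(B) = \{3_+, v\}$. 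The length rules out $c = 3_+$, while $c = v = 3_+ + 3_-$ would by unique decomposition require $3_- a_1 = 3_+$; this forces $\ell(a_1) = 1$, hence $a_1 = 1$ and $3_- = 3_+$, a contradiction. Once $B \cap C = \emptyset$ is established, the modular identity becomes $A = A \vee \emptyset = \langle 3_+, 3_-\rangle \cap B = B$ (the last equality because $B \subseteq \langle 3_+, 3_-\rangle$), contradicting $A \subsetneq B$.
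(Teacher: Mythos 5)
Your proof is correct and takes essentially the same route as the paper: both exhibit a pentagon configuration with bottom $\emptyset$, the chain $q\M \subsetneq q\M \vee (p{+}q)\M$ on one side, the principal ideal $p\M$ on the other, and top $\langle p,q\rangle$, for two distinct primes $p,q$ and their sum --- you take $p=3_-$, $q=3_+$ where the paper takes $p=2$, $q=3_+$, and you spell out the length-induction for the key empty intersection (which the paper only sketches) via Lemma~\ref{SubmagmaCondition}. The transfer to general $\M_X$ through the copy $\langle x\rangle \cong \M$ likewise matches the paper's concluding remark.
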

\begin{proof} Consider the elements $2,3_+$, and $(2+3_+) \in \M$. Since $2$ and $3_+$ are prime, by Theorem~\ref{PrincipalIdeals}(2), $2\M \cap 3_+\M=\emptyset$. Every element of $3_+\M \vee (2{+}3_+)\M$ is a combination of $3_+$ and $2{+}3_+$, and we can see by induction on the length that no such element is divisible by $2$. Hence, $2\M \cap \big(3_+\M \vee (2{+}3_+)\M\big)=\emptyset$. Finally, we have that  $2\M \vee \big(3_+\M \vee (2{+}3_+)\M\big)=2\M \vee 3_+\M$, since $2{+}3_+ \in 2\M \vee 3_+\M$. 
In sum, we have the so-called pentagon lattice as a sublattice of $\Sub(\M)$, see Figure~\ref{Pentagon}. A lattice is non-modular iff it contains a copy of such lattice \cite{davey2002introduction}, whereby $\M$ is not modular. Since every free magma $\M_X$ contains a copy of the cyclic free magma $\M$, $\M_X$ is not modular. 
\end{proof}

\begin{figure}[tb] 
\begin{align*} \small
  \xymatrix@C-1.5pc@R-0.5pc { & 2\M \vee 3_+\M  \ar@{-}[ddl] \ar@{-}[dr]& \\
 && 3_+\M \vee (2{+}3_+)\M \ar@{-}[d] \\
   2\M  \ar@{-}[dr] && 3_+\M  \ar@{-}[dl] \\
  & \emptyset} 
  \end{align*}
\caption{Pentagon sublattice in $\Sub(\M)$.}\label{Pentagon}
\end{figure}
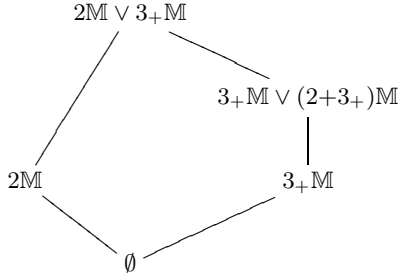

\section{$k$-maximal submagmas and additive prime sets} \label{MaximalAndAdditivePrime}

\begin{definition} Given a complete lattice $\mathcal{L}$ with an upper bound $U$ and a lower bound $V$, we say that $X\in \mathcal{L}$ is \emph{$k$-maximal} iff the longest ascending chain from $X$ to $U$ has length $k$, i.e. there are only $k-1$ elements between $X$ and $U$. 
$k$-minimal elements are defined symmetrically for the lower bound.  
\end{definition}

\begin{example} $\M$ is a $0$-maximal submagma. $\M\setminus \{1\}$ is $1$-maximal, because we have the ascending chain $\M \setminus \{1\} \subsetneq \M$, and we cannot interpose another submagma between.  Since we have the monomorphism $f:\M^2 \longrightarrow M\setminus \{1\} \subset \M$, Example~\ref{GeneratorProduct}, if $M$ is $k$-maximal submagma in $\M^2$, then $f(M)$ is  $(k+1)$-maximal in $\M$.

With the exception of the empty submagma, $\M$  has no $k$-minimal submagmas. Given a submagma $N \not=\emptyset$ we can always pick a proper submagma: $a{\cdot} 2 {\cdot} \M \subsetneq a\M \subseteq N$, where $a\in \G(N)$. 
\end{example}

In general $\M$ does not satisfy the \emph{ascending chain condition}. We have, for example, the infinite chain: $\N_2^+ \subset \N_3^+ \subset \N_4^+ \subset \cdots$, whereby $\N_2^+=2\M$ is not $k$-maximal for any integer $k$. So, some submagmas are $k$-maximal while other not. This means that a Hasse diagram does not capture the complete structure of $\Sub(\M)$, since infinite chains cannot be depicted. Notwithstanding, the study of $k$-maximal submagmas and the Hasse diagram, is visually helpful to understand the structure of submagmas of $\M$. 

\begin{definition} Given a magma $M$, we say that a set $\p \subseteq M$ is an \emph{additive prime set} iff
\begin{align*}
x+y \in \p \implies x\in \p \mbox{ or } y \in \p.
\end{align*}
Where the context permits, we say simply that $\p$ is prime.
We say that a set $\s \subseteq M$ is \emph{closed} iff
\begin{align*}
x+y \in \s \implies x\in \s \mbox{ and } y \in \s.
\end{align*}
Clearly, every closed set is a prime set.
\end{definition}

\begin{example} \label{ExamplePrimeSets} $\emptyset, \{1\}, \{1,3_-,5_+\}$, $\{1,2,2^2, 3_-\}$ and $\{1,2, 2^2, 2^3,2^4\}$ are some examples of prime sets in $\M$. Even, $\emptyset, \{1\}$, $\{1,2,2^2, 3_-\}$ and $\{1,2, 2^2, 2^3,2^4\}$ are closed subsets. However, the set $\{1,3_-,5_+\}$ is not closed because $3_-=2+1$ but $2$ is not in the set. Closed subset arises in Lazard set of trees to form a basis of a free Lie algebra, see \cite{Reutenauer1993BookFreeLie}. 
\end{example} 

\begin{example}
Prime sets allows us to do another proof of the non-countability of $\Sub(\M)$. We just consider the injective mapping:
\begin{align*}
2^\mathbb{N} &\longrightarrow \Sub(\M)\\
X &\longmapsto \big( \{1\} \cup \{(n+1)_+ \mid n \in X\} \big)^\com,
\end{align*}
and it is checked that $\{1\} \cup\{(n+1)_+ \mid n \in X\}$ is a prime set. 
\end{example}

\begin{proposition} \label{BasiPropPrime} Let $\p,\q$ be subsets of a magma $M$. 
\begin{enumerate}
\item $\p$ is prime $\iff \p^\com$ is a submagma of $M$.
\item $\p,\q$ prime $\implies \p\cup \q$ prime.
\item $\p,\q$ prime $\implies \p\cup \q \cup (\p+\q)$ prime.
\item Each non-empty prime set contains at least one generator of $M$.
\item If $\p\subseteq \M$ is prime with $\p\not=\emptyset$, then  $1\in \p$.
\end{enumerate}
\end{proposition}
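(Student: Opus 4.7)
The plan is to unwind each clause against the definition of primality. Statement (1) is a direct contrapositive: the implication $x+y\in\p \Rightarrow (x\in\p$ or $y\in\p)$ is logically equivalent to $(x\notin\p$ and $y\notin\p) \Rightarrow x+y\notin\p$, which says precisely that $\p^\com$ is closed under the operation, i.e.\ a submagma.

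For (2), suppose $x+y\in\p\cup\q$; then $x+y$ lies in one of the two prime sets, and primality of that set places $x$ or $y$ in it, hence in $\p\cup\q$. Statement (3) splits into three cases according to where $x+y$ sits. If $x+y\in\p$ or $x+y\in\q$, the argument for (2) applies verbatim. The delicate case is $x+y\in\p+\q$, i.e.\ $x+y=a+b$ for some $a\in\p$ and $b\in\q$. Here I would invoke unidecomposability of $M$, which is the implicit setting of the paper since free magmas satisfy it by the characterization of Section~\ref{FreeMagmas}; unidecomposability forces $x=a\in\p$ and $y=b\in\q$, so both $x$ and $y$ already lie in $\p\cup\q\subseteq\p\cup\q\cup(\p+\q)$. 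I expect this unique-decomposition step to be the main subtlety, since in a magma where decomposition is not unique the claim can genuinely fail (one could exhibit $x+y=a+b$ with $\{x,y\}$ disjoint from $\p\cup\q\cup(\p+\q)$).

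For (4), I would proceed by induction on the gradation (the length $\ell$ when $M=\M$). Pick any $x\in\p$; if $x$ lies in the minimal generating set $\G(M)$, we are done. Otherwise $x$ decomposes as $x=y+z$ for some $y,z\in M$ with $\ell(y),\ell(z)<\ell(x)$, and primality of $\p$ forces $y\in\p$ or $z\in\p$; the induction hypothesis applied to that element yields a generator lying in $\p$. Finally, (5) is immediate from (4): the unique minimal generating set of $\M$ is $\{1\}$, so any non-empty prime set in $\M$ must contain the generator $1$.
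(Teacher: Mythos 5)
Your arguments for (1), (2) and (5) coincide with the paper's. On (3) you are actually more scrupulous than the paper: the source's proof says ``$x+y\in\p+\q$, which means that $x\in\p$ and $y\in\q$'', silently using uniqueness of decomposition, whereas you name unidecomposability as the hypothesis doing the work. Your suspicion that the clause can fail otherwise is correct: in the three-element magma $\{u,v,w\}$ with every sum equal to $w$, the sets $\p=\q=\{u\}$ are (vacuously) prime, yet $\p\cup\q\cup(\p+\q)=\{u,w\}$ is not, since $v+v=w$. So as stated for an arbitrary magma $M$ part (3) needs the unidecomposability hypothesis you supply; this is a defect of the statement, not of your proof.

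The one genuine gap is in (4). The proposition is asserted for an arbitrary magma $M$, but your induction on the gradation presupposes that $M$ is graded (so that the decomposition $x=y+z$ strictly decreases a well-founded measure and the recursion terminates) and that every non-generator decomposes; neither is available in general --- in the trivial magma $m+m=m$ your recursion never descends. The paper avoids induction entirely with a two-line closure argument: if $\p$ misses every generator, then the generating set lies in the submagma $\p^\com$ (using part (1)), hence $M=\langle\G(M)\rangle\subseteq\langle\p^\com\rangle=\p^\com$, forcing $\p=\emptyset$. That argument needs no length function and works for any generating set of any magma, which is what the stated generality of (4) requires; your version proves the result only for graded magmas such as $\M$, which does still suffice for the application in (5).
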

\begin{proof} (1) is rather trivial: $\p^\com$ is a submagma iff for any $x,y\in \p^\com$, $x+y\in  \p^\com$, which is equivalent to say that if $x,y\not \in \p$, $x+y \not \in \p$, which is equivalent to say that if $x+y \in \p$, $x\in \p $ or $y \in \p$.   
(2) If $\p,\q$ are prime, then $\p^\com, \q^\com$ are submagmas. $\p^\com \cap \q^\com=(\p \cup \q)^\com$ is a submagma which means that $\p\cup \q$ is prime.
(3) we suppose that $x+y\in (\p\cup \q) \cup (\p+\q)=\mathfrak{r}$. Then either $x+y \in (\p\cup \q)$ or $x+y \in (\p+\q)$. In the first case, by (2), either $x\in \p\cup \q$ or $y \in \p\cup \q$, which implies that $x\in \mathfrak{r}$ or $y \in \mathfrak{r}$. In the second case $x+y \in \p+\q$, which means that $x\in \p$ and $y\in \q$. Then, trivially $x\in \mathfrak{r}$ or $y \in \mathfrak{r}$. 
(4) we suppose that $\G(M)\cap \p=\emptyset$ with $\p$ prime. Necessarily $\G(M) \subseteq \p^\com$ which implies that $\langle \G(M)\rangle \subseteq \langle \p^\com \rangle$. Since $\langle \G(M)\rangle=M$ and $\langle \p^\com\rangle=\p^\com$, we have that $M \subseteq \p^\com \subseteq M$, whereby  $M=\p^\com$, or what is the same $\p=\emptyset$. (5) is a particular case of (4) since $1$ is the generator of $\M$.
\end{proof}

\begin{proposition}  Let $\s,\tc$ be subsets of a magma $M$.  
\begin{enumerate}
\item $\s$ is closed iff $\s^\com$ is a bilateral ideal under the sum, that is: $M+\s^\com \subseteq \s^\com$ and $\s^\com + M \subseteq \s^\com$.
\item If $\s,\tc$ are closed, $\s \cup\tc$ is closed.
\item Let $M=\M$. For $x\in \M$, we define the \emph{segment} $[x]=\{y  \in \M \mid y\leq x\}$. We have that $\s$ is closed iff $[\s]=\s$. In particular, $[X]$ is a closed set for any set $X\subseteq \M$. 
\item If $M=\M$ and $\s\not=\emptyset$ is closed, $\s^\com$ is not finitely generated.
\end{enumerate}
\end{proposition}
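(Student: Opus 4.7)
The plan is to tackle the four statements in order, with (1) and (2) being contrapositive/set-algebraic, (3) being an order-theoretic check, and (4) requiring the explicit construction of infinitely many generators of $\s^\com$.

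For (1), I would simply rewrite the definition. Saying $\s$ is closed means $x+y\in\s\Rightarrow x,y\in\s$, whose contrapositive is: whenever at least one of $x,y$ lies in $\s^\com$, the sum $x+y$ lies in $\s^\com$. Splitting on which of the two summands is required to be in $\s^\com$ yields exactly the inclusions $M+\s^\com\subseteq\s^\com$ and $\s^\com+M\subseteq\s^\com$. For (2), I would invoke (1) and note $(\s\cup\tc)^\com=\s^\com\cap\tc^\com$; the intersection of two bilateral additive ideals is plainly again bilateral (if $m\in M$ and $z\in\s^\com\cap\tc^\com$ then $m+z$ lies in both $\s^\com$ and $\tc^\com$, and likewise for the right side), so $\s\cup\tc$ is closed.

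For (3) I first note that $[\s]=\bigcup_{x\in\s}[x]$. The implication $[\s]=\s\Rightarrow\s$ closed is immediate: if $x+y\in\s$, then $x,y\leq x+y$ gives $x,y\in[\s]=\s$. For the converse I would induct on the length of a chain $x=z_0\to z_1\to\cdots\to z_n=y$ witnessing $y\leq x\in\s$; each step $z_{i-1}\to z_i$ expresses $z_{i-1}$ as a sum with $z_i$ as one of its two summands, so closedness carries $z_{i-1}\in\s$ into $z_i\in\s$. The ``in particular'' clause then reduces to verifying $[[X]]=[X]$, which is just reflexivity and transitivity of $\leq$.

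For (4) the strategy is to exhibit infinitely many distinct elements of $\G(\s^\com)$, in the non-degenerate case $\s^\com\neq\emptyset$. Since $\s$ is a nonempty prime set (closed implies prime), Proposition~\ref{BasiPropPrime}(5) gives $1\in\s$ and hence $1\notin\s^\com$. Picking any $x_0\in\s^\com$ and setting recursively $x_{n+1}=1+x_n$, part (1) applied on the left forces $x_{n+1}\in\s^\com$ for every $n$. Moreover the unique decomposition $x_{n+1}=1+x_n$ has $1\notin\s^\com$, so $x_{n+1}\in\G(\s^\com)$ directly from the definition of $g_{\s^\com}$. Since $\ell(x_n)=\ell(x_0)+n$ is strictly increasing, the $x_n$ are pairwise distinct, hence $\G(\s^\com)$ is infinite. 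The only real subtlety is the inductive descent along $\leq$ in part (3); the remaining items essentially unpack the closedness axiom.
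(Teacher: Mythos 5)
Your parts (1)--(3) are correct and essentially follow the paper: (1) is the same contrapositive unpacking, (2) the same reduction via complements (your element-wise check that an intersection of bilateral ideals is bilateral is slightly cleaner than the paper's set identity $M+(\s^\com\cap\tc^\com)=(M+\s^\com)\cap(M+\tc^\com)$, of which only the inclusion $\subseteq$ is needed or valid in a general magma), and (3) your induction along the chain witnessing $y\leq x$ is a rigorous rendering of the paper's informal ``additive factors of additive factors, and so forth'' descent. Part (4) is where you genuinely diverge. The paper argues by contradiction: assuming $\G(\s^\com)=\{a_1,\dots,a_n\}$, it forms $b=(\cdots(a_1+a_2)+\cdots+a_n)\in\s^\com$ and uses Lemma~\ref{SubmagmaCondition} to show $b+1\notin\s^\com$ (it is too long to be one of the $a_j$, and $1\notin\s^\com$), contradicting the bilateral-ideal property from (1). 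You instead exhibit infinitely many generators directly: $x_{n+1}=1+x_n$ stays in $\s^\com$ by (1), and since $1\in\s$ (a nonempty closed set is prime, so Proposition~\ref{BasiPropPrime}(5) applies; alternatively $1\in[\s]=\s$ by (3)), the unique decomposition of $x_{n+1}$ has a summand outside $\s^\com$, forcing $g_{\s^\com}(x_{n+1})=\{x_{n+1}\}$ and hence $x_{n+1}\in\G(\s^\com)$; the strictly increasing lengths make these pairwise distinct. Both arguments are correct; yours is constructive and yields the slightly stronger fact that $\G(\s^\com)$ contains generators of unbounded length, whereas the paper's needs only a single witness. Both proofs tacitly exclude $\s=\M$, where $\s^\com=\emptyset$ is generated by the empty set; you at least flag this by restricting to $\s^\com\neq\emptyset$.
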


\begin{proof} 
(1) by definition of closed set, if $x\not \in \s$ or $y\not \in  \s$, then $x+y \not \in \s$. In particular, if $x\not \in \s$ and $y\in M$, then $x+y \not \in \s$, and if $x\in M$ and $y \not \in \s$, then $x+y \not \in \s$. The first statement says that  $\s^\com +M \subseteq \s^\com$, the second one, $M + \s^\com \subseteq \s^\com$.

(2) by (1) $\s^\com,\tc^\com$ are bilateral ideals. Then $M+(\s\cup \mathfrak{t})^\com=M+(\s^\com \cap \mathfrak{t}^\com)=(M+\s^\com) \cap (M+\mathfrak{t}^\com) \subseteq \s^\com \cap \tc^\com=(\s\cup \tc)^\com$. The inclusion $(\s\cup \mathfrak{t})^\com+ M \subseteq (\s\cup \tc)^\com$ is similar, therefore $(\s\cup \tc)^\com$ is a bilateral ideal. 

(3) from the definition, $\s$ is closed iff for each $x \in \s$ both additive factors of $x$ are also in $\s$. By applying again the definition, $\s$ is closed iff the additive factors of $x$ and the additive factors of the additive factors of $x$ are in $\s$, and so forth. All these factors form the set $[x]$. So $\s$ is closed iff $[\s]\subseteq \s$. Since we have always that $[\s] \supseteq \s$, we have that $\s$ is closed iff  $[\s]= \s$. 

It is easily checked that the set extension $[\cdot]: 2^\M \longrightarrow 2^\M$ is a closure operator. In particular, $[X]$ is a closed set for any subset $X\subseteq \M$, since $[[X]]=[X]$.

(4) suppose that $N=\s^\com \subseteq\M$ is finitely generated with generators $a_1, \ldots, a_n$. Consider then the element:
\begin{align*}
b=(\cdots ((a_1+a_2)+a_3)+\cdots +a_n))\cdots) \in N,
\end{align*}
and let us show that $b+1\not \in N$. By Lemma~\ref{SubmagmaCondition} if $b+1\in N$ then either $b,1 \in N$ or $b+1 \in \G(N)=\{a_1, \ldots, a_n\}$. In the first case, if $1\in N$ then $N=\M$ and $\s=\emptyset$, which is a contradiction.  
In the second case, $b+1=a_j$ for some $j=1, \ldots,n $. This is impossible since $\ell(b+1)=1+\sum_{i=1}^n \ell(a_i) > \ell(a_j)$ for each $j=1, \ldots,n $.
Since $b\in N$, $1\in \M$, but $b+1\not\in N$, then $N$ is not a bilateral ideal under $+$, which is a contradiction with (1). 
\end{proof}

\begin{definition}
We call the \emph{spectrum of a set} $X\subseteq \M$ the set $\Spec(X)=\{ \p \subseteq X  \mid \p \mbox{ prime}\}$. 
\end{definition}
Given a prime set $\p$, by Proposition~\ref{BasiPropPrime}(2), $\Spec(\p)$ is a semilattice with the union operation and bounds $\emptyset, \p$. 
We recall some vocabulary on order theory. A partial order $(\mathcal{L}; \leq)$ is said to have \emph{length} $k$, notated $\length(\mathcal{L})=k$, iff its longest chain has size $k+1$, and it is said to have \emph{width} $k$, notated $\width(\mathcal{L})=k$, iff its greatest antichain has size $k$. 
Figure~\ref{SpectraExample} shows some spectra of the prime sets from the Example~\ref{ExamplePrimeSets} with lengths $3, 4$, and $5$, respectively; and with widths $2,2$ and $1$, respectively.  We abbreviate ``$a \in \mathcal{L}$ is a maximal element under the order $\leq$'' as ``$a$ is $\leq$-maximal''.

\begin{figure}[tb] 
{\small
\begin{align*} 
  \xymatrix@C-3.0pc@R-0.5pc { & \{1,3_-,5_+\}    \ar@{-}[dl] \ar@{-}[dr]& \\
 \{1,3_-\}  \ar@{-}[dr] & & \{1,5_+\}  \ar@{-}[dl] \\
  &\{1\} \ar@{-}[d] &  \\
  &\emptyset &} 
    \,\,\,\,\,\,\,\,\,\,\,\,\,\,\,\,\,\,
   \xymatrix@C-3.0pc@R-0.5pc { & \{1,2, 2^2,3_-\}    \ar@{-}[dl] \ar@{-}[dr]& \\
 \{1,2, 2^2\}  \ar@{-}[d] & & \{1,2, 3_-\}  \ar@{-}[dll]\ar@{-}[d] \\
   \{1,2\}  \ar@{-}[dr] & & \{1, 3_-\}  \ar@{-}[dl] \\
  &\{1\} \ar@{-}[d] &  \\
  &\emptyset &} 
  \,\,\,\,\,\,\,\,\,\,\,\,\,\,\,\,\,\,
  \xymatrix@C-3.0pc@R-1.0pc {  \{1,2, 2^2, 2^3, 2^4\}    \ar@{-}[d]& \\ \{1,2, 2^2, 2^3\}    \ar@{-}[d]& \\
 \{1,2, 2^2\}    \ar@{-}[d]& \\
  \{1,2\}    \ar@{-}[d]& \\
    \{1\}    \ar@{-}[d]& \\
  \emptyset }
\end{align*}
}
\caption{Spectra from the Example~\ref{ExamplePrimeSets}.}\label{SpectraExample}
\end{figure}
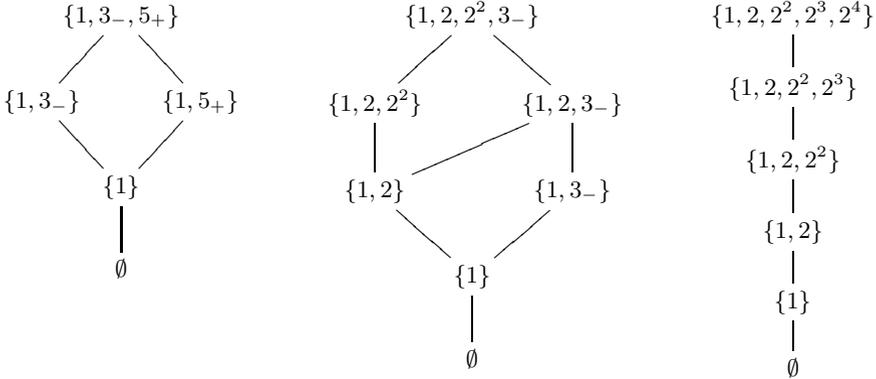

\begin{lemma} \label{LemaLong} Let $\p\subset \M$ be a finite prime set. Then $\length(\Spec(\p))=|\p|$. 
\end{lemma}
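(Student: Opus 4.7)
The plan is to prove both inequalities $\length(\Spec(\p)) \leq |\p|$ and $\length(\Spec(\p)) \geq |\p|$ using the chain characterization of length (longest chain has size $\length + 1$).

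For the upper bound, I would observe that any strictly ascending chain $\p_0 \subsetneq \p_1 \subsetneq \cdots \subsetneq \p_k$ in $\Spec(\p)$ satisfies $|\p_i| \geq |\p_{i-1}| + 1$, so $|\p_k| \geq k$; combined with $\p_k \subseteq \p$ this forces $k \leq |\p|$, hence the longest chain has at most $|\p|+1$ elements.

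For the lower bound, I would proceed by induction on $|\p|$, with the trivial base case $|\p|=0$ giving the one-element chain $\{\emptyset\}$. The inductive step reduces to the key claim: \emph{every finite non-empty prime set $\p$ contains an element $x$ such that $\p\setminus\{x\}$ is again prime}. Once this is established, the inductive hypothesis applied to $\p\setminus\{x\}$ yields a chain of size $|\p|$ in $\Spec(\p\setminus\{x\}) \subseteq \Spec(\p)$, which we extend by appending $\p$ itself, producing a chain of size $|\p|+1$ in $\Spec(\p)$.

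To prove the key claim, I would pick $x \in \p$ of maximum length $\ell(x)$. Suppose $a+b \in \p \setminus \{x\}$. By primality of $\p$, either $a \in \p$ or $b \in \p$; if, say, $a \in \p$ and additionally $a=x$, then
\begin{equation*}
\ell(a+b) = \ell(x) + \ell(b) > \ell(x),
\end{equation*}
contradicting the maximality of $\ell(x)$ among elements of $\p$ (note $a+b \in \p$). Thus $a \neq x$, giving $a \in \p \setminus \{x\}$; symmetrically for $b$. Hence $\p \setminus \{x\}$ is prime.

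The only real obstacle is choosing $x$ so that its removal preserves primality. The length-maximality trick resolves this cleanly: selecting $x$ of maximal length ensures $x$ can never occur as a summand of another element of $\p$, so no element of $\p \setminus \{x\}$ depends on $x$ as a witness to primality.
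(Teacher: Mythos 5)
Your proof is correct and follows essentially the same route as the paper: the same counting argument for the upper bound, and the same induction for the lower bound via the key claim that deleting a suitably maximal element preserves primality. The only (immaterial) difference is that you select an element of maximal length $\ell(x)$ where the paper selects a $\leq$-maximal element for the subterm order; since $x < y$ forces $\ell(x) < \ell(y)$, these choices serve the same purpose and your length computation gives the same contradiction.
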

\begin{proof} First we see that $\length(\Spec(\p))\leq|\p|$. Since $\Spec(\p) \subseteq 2^\p$, the longest chain in $\Spec(\p)$ is also in $2^\p$, and therefore $\length(\Spec(\p)) \leq \length(2^\p)$. Now we see that $\length(2^\p) \leq |\p|$. Consider a chain in $2^\p$, say $\emptyset=A_0\subset A_1 \subset \cdots \subset A_m=\p$. We have that $|A_{i+1}\setminus A_i|\geq 1$, otherwise the inclusion is not strict. So for each inclusion in this chain we have at least a new element. Since $\p$ has $|\p|$ elements, $m \leq |\p|$.  

We see the other inequality, $\length(\Spec(\p))\geq|\p|$, by induction on the size $|\p|$. If $|\p|=0$, $\p=\emptyset$ and $\length(\Spec(\p))=0$.  If $|\p|=1$, then $\p=\{1\}$ and $\length(\Spec(\p))=1$.  
We suppose that the statement is true for each prime set with size $n$ and let $\p$ be a prime set with $|\p|=n+1$. We pick some $\leq$-maximal element, say $a\in \p$. It turns out that $\p \setminus \{a\}$ is a prime set. Let us see this. For any $x+y \in \p\setminus \{a\}$ we have that either $x \in \p$ or $y\in \p$. If $x=a$ or $y=a$ then $x+y \rightarrow a$, and then $x+y \geq a$, but $a$ is maximal so $x,y\not=a$. Thus, if $x+y \in \p\setminus \{a\}$ then either $x\in \p\setminus \{a\}$ or $y \in \p \setminus \{a\}$, whereby $\p\setminus \{a\}$ is a prime set. 
Now consider the longest chain in $\Spec(\p \setminus \{a\})$, which, by hypothesis of induction, will have length at least $n$, say: $\q_1 \subset \q_2 \subset \cdots \subset \q_n=\p \setminus \{a\}.$
We can expand this chain in $\Spec(\p)$ by adding $\p$ as: $\q_1 \subset \q_2 \subset \cdots \subset \q_n \subset \p$. Clearly the last one is a proper inclusion: $\q_n = \p\setminus \{a\} \subset \p$. So this chain has length at least $n+1$.
\end{proof}

\begin{theorem} \label{KMaximalArePrime} Let $N\subseteq \M$ a submagma. $N$ is $k$-maximal iff  $|N^\com|=k$. 
\end{theorem}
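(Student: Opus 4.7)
The plan is to exploit the order-reversing bijection between submagmas of $\M$ containing $N$ and prime subsets of $N^\com$. By Proposition~\ref{BasiPropPrime}(1), a set $\q \subseteq \M$ is prime iff $\q^\com$ is a submagma, and since complementation reverses inclusion this gives a bijection
\[ \{N' \in \Sub(\M) : N \subseteq N'\} \longleftrightarrow \Spec(N^\com), \qquad N' \longmapsto (N')^\com, \]
under which an ascending chain of submagmas from $N$ to $\M$ of length $m$ corresponds to an ascending chain from $\emptyset$ to $N^\com$ in $\Spec(N^\com)$ of the same length. Consequently, $N$ is $k$-maximal precisely when $\length(\Spec(N^\com)) = k$, and the whole statement reduces to a length computation for $\Spec(N^\com)$.

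With that reduction in place, the finite case is immediate: if $|N^\com| = k < \infty$, Lemma~\ref{LemaLong} gives $\length(\Spec(N^\com)) = k$, so $N$ is $k$-maximal. It remains to handle the case $|N^\com| = \infty$ and show that $N$ is not $k$-maximal for any $k$.

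For this I would use a length-filtration of $N^\com$: set $\q_n = N^\com \cap \{x \in \M : \ell(x) \leq n\}$. Each $\q_n$ is finite (since each slice $(\M)_i$ is finite, of size $c_i$) and prime, because any decomposition $a+b \in \q_n$ yields $a+b \in N^\com$, so by primality of $N^\com$ one of $a, b$ lies in $N^\com$, and the length bound $\ell(a), \ell(b) \leq \ell(a+b) \leq n$ keeps that factor inside $\q_n$. Since $N^\com$ is infinite, $|\q_n|$ is unbounded in $n$, so Lemma~\ref{LemaLong} applied to each $\q_n$ produces chains in $\Spec(\q_n) \subseteq \Spec(N^\com)$ of arbitrarily large length, forcing $\length(\Spec(N^\com)) = \infty$.

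The main delicate point is the infinite case: the inductive proof of Lemma~\ref{LemaLong} relied on picking a $\leq$-maximal element of $\p$, which need not exist inside an infinite prime set of $\M$, so a direct transfer of that argument fails. Working instead with the length-bounded finite slices $\q_n$ sidesteps this issue cleanly and reduces everything back to the finite lemma, without needing any Zorn-style argument.
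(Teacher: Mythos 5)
Your proof is correct and follows essentially the same route as the paper: complementation gives an order-reversing bijection between the submagmas containing $N$ and the prime subsets of $N^\com$, reducing $k$-maximality to the computation $\length(\Spec(N^\com))=|N^\com|$ via Lemma~\ref{LemaLong}. Your explicit handling of the case $|N^\com|=\infty$ through the finite length-truncations $\q_n$ is a welcome refinement, since the paper's proof simply invokes Lemma~\ref{LemaLong}, which is stated only for finite prime sets, and leaves the infinite case implicit.
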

\begin{proof} We have an isomorphism of semilattices: 
\begin{align*}
(\cdot)^\com: (\Sub(\M); \cap) \longrightarrow (\mathbf{Spec}_{+}(\M); \cup).
\end{align*}
It is clear that $N$ is a $k$-maximal submagma iff $\p=N^\com$ is a $k$-minimal prime set. It is also clear that $\p$ is $k$-minimal in $\mathbf{Spec}_{+}(\M)$ iff $\length(\Spec(\p))=k$, since every descending chain in $\mathbf{Spec}_{+}(\M)$ from $\p$ to $\emptyset$  is in $\Spec(\p)$, and conversely. Finally, by Lemma~\ref{LemaLong}, $\length(\Spec(\p))=|\p|$. 
\end{proof}

\section{All the finite prime sets} \label{AllTheFinitePrimeSets}

Theorem~\ref{KMaximalArePrime} explains that in order to know the $k$th level of the Hasse diagram of $\Sub(\M)$ it suffices to know all the prime sets of size $k$. 
The smallest prime sets can be calculated manually. We use the notation minus to reverse the order, that is $x-y=y+x$. With this, the expression $x\pm y$ means the two possibilities $x+y, y+x$ with the convention that when a term $x\pm y$ appears twice, or more times, in a set we will suppose that the sign is the same in all the instances. For example, $\{1, a\pm 1, b\pm (a\pm 1)\}$ represents four alternative sets: $\{1, a{+}1, b{+}(a{+}1)\}$, $\{1, a{-}1, b{+} (a{-}1)\}$, $\{1, a{+}1, (a{+}1){+}b\}$, and $\{1, a{-}1, (a{-}1){+}b\}$.

\begin{proposition} \label{SmallPrimeSets} Let $\p \subseteq \M$ be a prime set, with $|\p|=n$, then:
\begin{enumerate}
\item if $n=0$ then $\p=\emptyset$; 
\item if $n=1$ then $\p=\{ 1\}$;
\item if  $n=2$ then $\p= \{1, a\pm 1\}$ for some $a\in \M$; 
\item if  $n=3$ then $\p=\{1,a\pm 1, b\pm 1\}$ or $\p=\{1, a\pm 1, b\pm (a\pm 1)\}$ for some $a,b \in \M$. 
\end{enumerate}
\end{proposition}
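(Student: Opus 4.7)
The plan is to argue by induction on the length of the elements of $\p$, exploiting Proposition~\ref{BasiPropPrime}(5) (which forces $1\in\p$ whenever $\p\neq\emptyset$) and the observation that any decomposition $z=x+y$ has $\ell(x),\ell(y)<\ell(z)$. The cases $n=0$ and $n=1$ are immediate: $n=0$ gives $\p=\emptyset$, while $n=1$ combined with Proposition~\ref{BasiPropPrime}(5) forces $\p=\{1\}$.

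For $n=2$, I would let $z\in\p\setminus\{1\}$ be the second element. Since $\ell(z)\geq 2$, we can write $z=x+y$ with $\ell(x),\ell(y)<\ell(z)$; in particular $x\neq z$ and $y\neq z$. The prime property then forces $x\in\p$ or $y\in\p$, and the only option compatible with $\p=\{1,z\}$ and the length inequality is that one of the summands equals $1$. This gives $z=1+a$ or $z=a+1$ for some $a\in\M$, i.e.\ $z=a\pm 1$.

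For $n=3$, let $z_1,z_2$ be the two non-unit elements, labeled so that $\ell(z_1)\leq\ell(z_2)$. Decomposing $z_1=x+y$, the same length argument used in the previous case shows that neither $x$ nor $y$ can be $z_1$ or $z_2$; hence, by primality, one of them is $1$, so $z_1=a\pm 1$ for some $a\in\M$. For $z_2$, decompose $z_2=u+v$ with $\ell(u),\ell(v)<\ell(z_2)$, so neither summand equals $z_2$. Primality now yields that one of $u,v$ lies in $\{1,z_1\}$. If it is $1$, then $z_2=b\pm 1$ for some $b\in\M$, giving the first form $\p=\{1,a\pm 1,b\pm 1\}$. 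If it is $z_1=a\pm 1$, then $z_2$ has the form $b+(a\pm 1)$ or $(a\pm 1)+b$ for some $b\in\M$, i.e.\ $z_2=b\pm(a\pm 1)$, where the inner $\pm$ is the same sign that appears in $z_1$ (respecting the stated convention), giving the second form.

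I do not foresee a serious obstacle here; the argument is essentially a careful enumeration of decompositions modulated by the length gradation. The only mild subtlety is bookkeeping for the $\pm$ convention when the shorter element reappears as a sub-term of the longer one, and making sure that the two forms stated in the proposition together exhaust all possibilities, which they do because the primality condition only requires \emph{one} of the two summands of $z_2$ to lie in $\p$, and the other summand is then unconstrained and absorbed into the parameter $b$.
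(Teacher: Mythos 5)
Your proof is correct and follows essentially the same route as the paper: where you order the two non-unit elements by length and use $\ell(x),\ell(y)<\ell(z)$ to rule out self-reference in each decomposition, the paper picks a $\leq$-maximal element $x$, observes that $\p\setminus\{x\}$ is again prime (reducing $n=3$ to $n=2$), and then enumerates the decompositions of $x$ in the same four cases. The resulting case analysis and the bookkeeping of the $\pm$ convention coincide with the paper's.
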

\begin{proof} (1) is trivial. (2) is trivial, by Proposition~\ref{BasiPropPrime}(5). For (3) we suppose $\p=\{1,x\}$, with $x\not=1$, and then $x=y+z$ and $y \in \p$ or $z\in \p$. Since $y,z\not=x$, then either $y=1$ or $z=1$. That is, there is an $a\in \M$ such that $x=1+a$ or $x=a+1$. 
(4) for $n=3$, we pick some $\leq$-maximal element, say $x\in \p$. Then $\p\setminus \{x\}$ is a prime set (we saw this in the proof of Lemma~\ref{LemaLong}) with $|\p \setminus \{x\}|=2$. So $\p \setminus \{x\}=\{1, a\pm 1\}$ for some $a\in \M$. We decompose $x=y+z$, and then either $y \in \{1, a \pm 1\}$ or $z \in  \{1, a \pm 1\}$. This yields four cases: (a) if $y=1$ then $\p=\{1, a \pm 1, 1+z\}$; (b) if $y=a\pm 1$, then  $\p=\{1, a \pm 1, (a\pm 1)+z\}$; (c) if $z=1$ then $\p=\{1, a \pm 1, y+1\}$; (d) if $z=a\pm 1$ then $\p=\{1, a \pm 1, y+ (a\pm 1)\}$. 
The cases (a) and (c) can be summarized as $\p=\{1,a\pm 1, b\pm 1\}$ for some $b\in \M$. The cases (b) and (d), as $\p=\{1, a\pm 1, b\pm (a\pm 1)\}$ for some $b \in \M$. 
\end{proof}

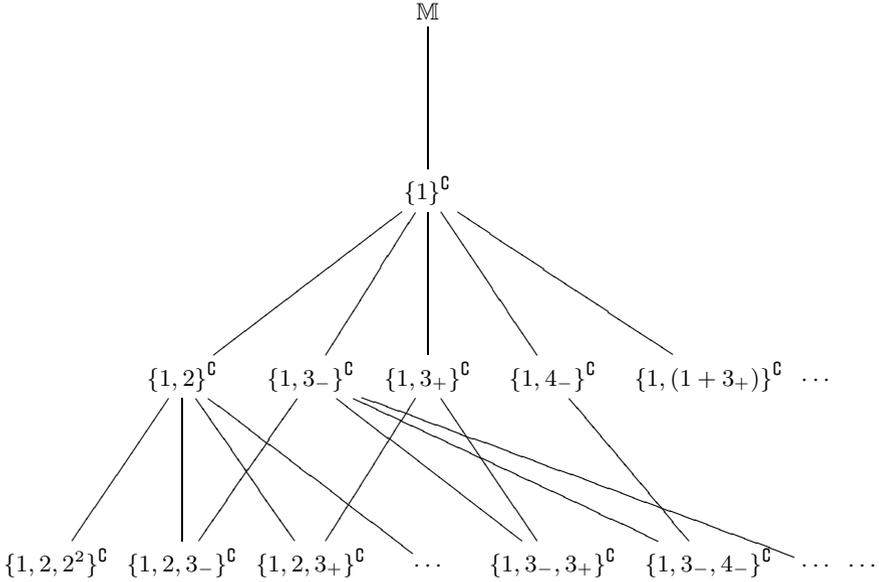
\begin{figure}[tb] 

\begin{align*}
\small \xymatrix@C-1.9pc@R4.5pc{ & & & \M  \ar@{-}[d] & & & \\
 & & & \{1\}^\com \ar@{-}[dll] \ar@{-}[dl] \ar@{-}[d] \ar@{-}[dr] \ar@{-}[drr]& & & \\
 & \{1,2\}^\com \ar@{-}[dl] \ar@{-}[d] \ar@{-}[dr] \ar@{-}[drr]& \{1,3_-\}^\com \ar@{-}[dl]  \ar@{-}[drr] \ar@{-}[drrr] \ar@{-}[drrrr] & \{1,3_+\}^\com  \ar@{-}[dl]  \ar@{-}[dr]& \{1,4_-\}^\com  \ar@{-}[dr] & \{1,(1+3_+)\}^\com  & \cdots \\
\{1,2,2^2\}^\com & \{1,2,3_-\}^\com & \{1,2,3_+\}^\com & \cdots & \{1,3_-,3_+\}^\com & \{1,3_-,4_-\}^\com & \cdots  & \cdots 
}
\end{align*}
\caption{The first four levels of the Hasse diagram of $\Sub(\M)$}\label{HasseDiagram}
\end{figure}

The four first levels of the Hasse diagram of $\Sub(\M)$ are depicted in Figure~\ref{HasseDiagram}. 
In order to get all the prime sets, the way in the above proof is not useful. However, notice that prime sets of size three are of two distinct forms: $\{1,a\pm 1, b\pm 1\}$ and $\{1, a\pm 1, b\pm (a\pm 1)\}$
for some $a,b \in \M$. We show a general procedure to obtain all the prime sets by grouping them by ``families''. 

We fix the set of strings given by the free monoid  $\Sigma^*$, where $\Sigma$ is the infinite alphabet $\Sigma=\{a, b, c, \ldots\} \cup \{(, ), \pm, 1\}$, see  \cite{hopcroft1979introduction}. We call the letters $a,b,c, \ldots$ \emph{variables}.
We define recursivelly a \emph{term} as:
\begin{align*}
&1, a, b, c, \ldots \mbox{ are terms};\\
&\mbox{if } x,y \mbox{ are terms, } (x\pm y) \mbox{ is a term}.
\end{align*}
We notate $\mathscr{L}\subset \Sigma^*$ the set of terms, for example, $((b\pm (a\pm 1))\pm c)$ is a string of $\mathscr{L}$. We give an interpretation of the language $\mathscr{L}$ by the following definition. 

\begin{definition} A \emph{substitution} is a mapping $\sigma:\mathscr{L}\longrightarrow \M$ such that:
\begin{enumerate}
\item $\sigma(1)=1$,
\item $\sigma((x \pm y))=\sigma(x)+\sigma(y)$ or $\sigma(x)- \sigma(y)$, provided $x,y$ are terms. 
\end{enumerate}
\end{definition}

A substitution stays defined by a choice of values in $\M$ for the variables and a choice of signs $+,-$ for each occurrence of the symbol $\pm$. We notice that this definition is consistent with the convention above for the symbol $\pm$. For example, if consider the term $(b\pm (a\pm 1))$, a possible substitution is given by the values $\sigma(a)=2$ and $\sigma(b)=3_-$ and the sign $+$ for the first token of $\pm$ and $-$ for the second token, which yields $\sigma((b\pm (a\pm 1)))=3_+{+}(2{-}1)$. When we consider a set of terms $\{1, a\pm 1, (b\pm (a\pm 1))\}$, the substitution yields the set $\{ 1, 2{-}1, 3_+{+}(2{-}1)\}$. That is, $\sigma$ keep up the sign $-$ for the term $2{-}1$ in both instances.  

\begin{definition} \label{Labelization}  Let $T$ be a rooted tree with root $r$. We define recursively a \emph{labelization} $\Lab: T \longrightarrow \mathscr{L}\subseteq \Sigma^*$ as follows:
\begin{enumerate}
\item $\Lab(r)=1$;
\item if $x$ is a vertex already labelled, and $y$ is a child of $x$, then $\Lab(y)=\big(a \pm  \Lab(x) \big)$, where $a$ is a new variable never used before. 
\end{enumerate}
Given a tree $T$ and a labelization $\Lab$ we define a \emph{family} as: 
\begin{align*} 
\Lambda_{\Lab}(T)= \{ & \sigma(\Lab(T))  \mid \sigma \mbox{ is a substitution}\}.
\end{align*}
\end{definition}

We remark that $\Lambda_{\Lab}(T)$ does not depend on the choice of the variables in each step of the labelization. Different labelizations yields the same family, or more formally, if $\Lab$ and $\Lab'$ are different labelizations, there are substitutions $\sigma$ and $\sigma'$ such that $\sigma(\Lab(T))=\sigma'(\Lab'(T))$, and then $\Lambda_{\Lab}(T)=\Lambda_{\Lab'}(T)$. Therefore, we notate families just as $\Lambda(T)$.  
The following theorem offers a representation of prime sets by rooted trees. 

\begin{theorem} \label{CaracteritzacioPrime} $\p \subset \M$ is a finite prime set iff there is a rooted tree $T$ with $|T|=|\p|$ such that $\p \in \Lambda(T)$. 
\end{theorem}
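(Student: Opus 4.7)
The plan is to prove both directions by induction, exploiting unidecomposability of $\M$ together with the structural rule that a labelization attaches a fresh variable beside the parent's label at each step.

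For the direction $(\Leftarrow)$, suppose $\p=\sigma(\Lab(T))$ with $|T|=|\p|$. The image $\{\sigma(\Lab(v)):v\in T\}$ has cardinality at most $|T|$, and the equality $|T|=|\p|$ forces the assignment $v\mapsto\sigma(\Lab(v))$ to be a bijection $T\to\p$. To check primeness, pick $z\in\p$ and suppose $z=u+w$; let $y\in T$ be the unique vertex with $\sigma(\Lab(y))=z$. If $y$ is the root then $z=1$, which is impossible since $\ell(u+w)=\ell(u)+\ell(w)\ge 2$. Otherwise $y$ has a parent $x$ and $\Lab(y)=(a\pm\Lab(x))$ for a variable $a$ freshly introduced at the step that added $y$; hence $\sigma(\Lab(y))$ decomposes in $\M$ as $\sigma(a)+\sigma(\Lab(x))$ or $\sigma(\Lab(x))+\sigma(a)$. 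By unidecomposability of $\M$ this coincides with $u+w$, so one of $u,w$ equals $\sigma(\Lab(x))\in\p$, and $\p$ is prime.

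For the direction $(\Rightarrow)$, I induct on $|\p|$. The base cases $|\p|\le 1$ are immediate: $\emptyset$ is handled by convention, and $\{1\}$ by the single-vertex tree with label $1$. For the inductive step I pick a $\le$-maximal element $z\in\p$; the argument already used inside the proof of Lemma~\ref{LemaLong} shows that $\p\setminus\{z\}$ remains prime. Apply the inductive hypothesis to obtain a tree $T'$, a labelization $\Lab'$, and a substitution $\sigma'$ with $|T'|=|\p\setminus\{z\}|$ and $\sigma'(\Lab'(T'))=\p\setminus\{z\}$; as above the size equality makes this assignment a bijection. Now decompose $z=u+w$ uniquely in $\M$. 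Primeness of $\p$ forces $u\in\p$ or $w\in\p$, and $\ell(u),\ell(w)<\ell(z)$ forces $u,w\ne z$, so whichever of $u,w$ lies in $\p$ actually lies in $\p\setminus\{z\}$. Say $u\in\p\setminus\{z\}$ (the other case is symmetric), and let $x\in T'$ be the unique vertex with $\sigma'(\Lab'(x))=u$. Build $T$ by attaching to $T'$ a new child $y$ of $x$, introduce a variable $a$ not previously used in $\Lab'(T')$, set $\Lab(y)=(a\pm\Lab'(x))$ with the sign chosen so that substituting $a\mapsto w$ produces $u+w=z$, and extend $\sigma'$ to $\sigma$ by $\sigma(a)=w$ while leaving all earlier values untouched. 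Then $\sigma(\Lab(T))=(\p\setminus\{z\})\cup\{z\}=\p$, and $z\notin\p\setminus\{z\}$ preserves bijectivity, yielding $|T|=|\p|$.

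The most delicate point is the sign bookkeeping in the inductive step: depending on whether the summand of $z$ already represented in $T'$ occupies the left or the right factor in $z=u+w$, the symbol $\pm$ must be instantiated as $+$ or $-$ so that the new label evaluates correctly under the extended substitution. A secondary subtlety is guaranteeing that adding the vertex $y$ does not collide with any previous image, which follows from the fact that $z$ is genuinely new. Once these two points are handled, the rest is the clean inductive assembly described above.
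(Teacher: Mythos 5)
Your proof is correct, but both directions are argued differently from the paper. For $(\Rightarrow)$ the paper does not induct: it forms the digraph $\mathcal{R}_{\rightarrow}$ induced on $\p$, observes that primeness makes every vertex reachable from $1$ along reversed edges, and extracts a spanning arborescence rooted at $1$, which is then read off as the labelled tree. Your induction on $|\p|$ --- peel off a $\leq$-maximal element (reusing the observation from the proof of Lemma~\ref{LemaLong} that this preserves primeness) and graft it back as a fresh leaf --- builds the same arborescence one vertex at a time; it is more self-contained, since it avoids appealing to the existence of spanning arborescences, and it makes the sign and fresh-variable bookkeeping explicit, at the cost of being slightly longer. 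For $(\Leftarrow)$ the paper inducts on $|T|$ by pruning a leaf, whereas you verify primeness directly: every non-root label evaluates, by unidecomposability, to a sum one of whose summands is the parent's value, which already lies in $\p$. Your direct argument is arguably cleaner and also shows, in passing, that the hypothesis $|T|=|\p|$ is not needed for this direction (any $\p\in\Lambda(T)$ is prime; the cardinality condition only matters so that the tree in the converse has the advertised size). The only nit: in the inductive step of $(\Rightarrow)$ you should note explicitly that the chosen maximal element $z$ is different from $1$ (since $1$ is the minimum of $\p$ and $|\p|\geq 2$), so that the decomposition $z=u+w$ actually exists.
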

\begin{proof} 
$(\Rightarrow)$ We consider the digraph with vertices $\M$ and edges  $(z,z')$ given by $z\rightarrow z'$.  We notate it by $\mathcal{R}_{\rightarrow}$. 
Given a prime set $\p$ we consider the induced  subgraph of $\mathcal{R}_{\rightarrow}$ over $\p$, notated $\mathcal{R}_\rightarrow (\p)$. Notice that, since $\p$ is prime, for each element $x\in \p$ there is a path from $x$ to 1 in $\mathcal{R}_\rightarrow (\p)$, or what is the same, every vertex is reachable from $1$ by inverting the arrows, see Figure~\ref{Papallona}. We take a \emph{spanning arborescence} (a spanning tree for directed graphs) of $\mathcal{R}_\rightarrow (\p)$, say $T_\p$. 

By the above comment such arborescence exists (see \cite{Zwick2018TheEncy, Gabow1986Efficient}) and necessarily the root of $T_\p$ is the element $1$, and notice that this is the first condition, Definition~\ref{Labelization}(1), of the labelization of the root. 
In addition, we have that $(y,x)$ is an edge of $T_\p$ iff $y\rightarrow x$, that is, iff there is some $a\in \M$ such that $y=a+x$ or $y=x+a$, abbreviated, $y=a \pm x$. This is just the second condition, Definition~\ref{Labelization}(2), of the labelization of the vertex $y$: $\Lab(y)=\big(a \pm  \Lab(x) \big)$. Hence, $\p \in \Lambda(T_\p)$. 

$(\Leftarrow)$ Suppose $\p \in \Lambda(T)$. We prove by induction on the size $|T|$ that $\p$ is prime. If $|T|=1$ then $\Lambda(T)=\{\{1\}\}$, and then $\p=\{1\}$, which is prime. We assume that the statement is true for trees of size $n$ and we consider a tree $T$ with $|T|=n+1$. We prune a leave $v$ of $T$, which yields a subtree $T'$ for which the statement is true. Therefore, there must be a prime set $\p' \in \Lambda(T')$ which yields $\p$ by adding an extra element. By the labelization and a substitution, the effect of adding the leave $v$ to $T'$ consists in joining an element to $\p'$ of the form $a\pm x$ where $a \in \M$ and $x\in \p'$. Since $\p'$ is prime, $\p$ is prime.  
\end{proof}

\begin{figure}[tb] 
\begin{align*}
\small \xymatrix{&&  & 2^4 \ar[d]  & & &\\
&2{+}3_- \ar@/_0.9pc/[ddr] \ar@/^2.3pc/[rrdd]  &  & 2^3 \ar[d]  & & 3_+{+}2   \ar@/^0.9pc/[ddl] \ar@/_2.3pc/[lldd] &\\
&& 3_-{+}2  \ar@/_0.3pc/[d] \ar@/^0.3pc/[rd]  & 2^2  \ar[d] &  2{+}3_+  \ar@/^0.3pc/[d] \ar@/_0.3pc/[ld] &&\\
3_-{\cdot}2^2 \ar[r] &3_-{\cdot}2 \ar[r] & 3_-  \ar[r] \ar@/^0.3pc/[rd]  & 2  \ar[d] & 3_+\ar[l] \ar@/_0.3pc/[ld] & 3_+{\cdot}2 \ar[l] & 3_+{\cdot}2^2 \ar[l]\\
&4_- \ar@/^0.5pc/[ru]  \ar@/_1.3pc/[rr] & 1{+}3_- \ar[r]\ar@/^0.3pc/[u]  & 1 & 3_+{+}1 \ar[l]\ar@/_0.3pc/[u] & 4_+\ar@/_0.5pc/[lu]  \ar@/^1.3pc/[ll]&\\
& & 1{+}4_- \ar@/^0.5pc/[ul] \ar@/_0.5pc/[ur]  && 4_+{+}1 \ar@/^0.5pc/[ul] \ar@/_0.5pc/[ur] &&\\
& 5_- \ar@/^0.7pc/[uu] \ar@/_2.5pc/[uurr] & & & & 5_+ \ar@/_0.7pc/[uu] \ar@/^2.5pc/[uull] & }
\end{align*}
\caption{A fragment of the digraph $\mathcal{R}_{\rightarrow}$}\label{Papallona}
\end{figure}

Figure~\ref{Papallona} shows a fragment of the digraph $\mathcal{R}_{\rightarrow}$ mentioned in the proof of Theorem~\ref{CaracteritzacioPrime}. We observe that it is anti-reflexive, anti-symmetric, and acyclic. The out-degree of a vertex $x$ is two, provided that $x \notin \M{\cdot}2$. When $x\in \M{\cdot}2$, the out-degree is one if $x\not=1$, and zero if $x=1$. 

\begin{example} There are nine families of prime sets of size five given by the nine rooted trees up to isomorphisms:
\begin{center}
{\fontsize{3}{4}\selectfont
\begin{forest}
for tree={grow'=90, circle,draw,l sep=1pt, s sep=4pt}
[[ ] [] [] [] ]
\end{forest}
}
\,\,\,\,\,\,\,\,
{\fontsize{3}{4}\selectfont
\begin{forest}
 for tree={grow'=90, circle,draw,l sep=1pt, s sep=4pt}
[[ [] ] [] []  ]
\end{forest}
}
\,\,\,\,\,\,\,\,
{\fontsize{3}{4}\selectfont
\begin{forest}
 for tree={grow'=90, circle,draw,l sep=1pt, s sep=4pt}
[[ [] ] [[] ]  ]
\end{forest}
}
\,\,\,\,\,\,\,\,
{\fontsize{3}{4}\selectfont
\begin{forest}
 for tree={grow'=90, circle,draw,l sep=1pt, s sep=4pt}
[ [ [] [] ]  []   ]
\end{forest}
}
\,\,\,\,\,\,\,\,
{\fontsize{3}{4}\selectfont
\begin{forest}
 for tree={grow'=90, circle,draw,l sep=1pt, s sep=4pt}
[ [[] [] [] ] ]
\end{forest}
}
\,\,\,\,\,\,\,\,
{\fontsize{3}{4}\selectfont
\begin{forest}
 for tree={grow'=90, circle,draw,l sep=1pt, s sep=4pt}
[ [ [[][]] ] ]
\end{forest}
}
\,\,\,\,\,\,\,\,
{\fontsize{3}{4}\selectfont
\begin{forest}
 for tree={grow'=90, circle,draw,l sep=1pt, s sep=4pt}
[[[ [] ] [] ] ]
\end{forest}
}
\,\,\,\,\,\,\,\,
{\fontsize{3}{4}\selectfont
\begin{forest}
 for tree={grow'=90, circle,draw,l sep=1pt, s sep=4pt}
[[[[]]] [] ]
\end{forest}
}
\,\,\,\,\,\,\,\,
{\fontsize{3}{4}\selectfont
\begin{forest}
 for tree={grow'=90, circle,draw,l sep=1pt, s sep=4pt}
[[[[[]]]] ]
\end{forest}
}
\end{center}
For instance, the fourth tree $T$ defines the family of prime sets:
\begin{align*}
\Lambda(T)=\{ \{1, a\pm1, b\pm 1, c\pm (a\pm 1), d\pm (a\pm 1)\} \mid a, b, c, d  \in \M\}.
\end{align*}
\end{example}

The representation of prime sets through rooted trees give us information about the width of $\Spec(\p)$. We finish with the following lemma: 

\begin{lemma} \label{NumberLeaves} If $T$ is a rooted tree and $\p\in \Lambda(T)$:
\begin{align*}
\width(\Spec(\p)) \geq \mbox{number of leaves of } T.
\end{align*}
\end{lemma}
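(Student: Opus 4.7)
The plan is to exhibit, for each leaf $v$ of $T$, a single prime subset of $\p$ of cardinality $|\p|-1$, namely $\p \setminus \{p_v\}$ where $p_v$ is the element of $\p$ corresponding to $v$. Sets of the same cardinality that are distinct are pairwise incomparable under inclusion, so this will automatically produce an antichain in $\Spec(\p)$ of size equal to the number of leaves of $T$.

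First I would fix a labelization $\Lab$ and substitution $\sigma$ realizing $\p = \sigma(\Lab(T))$. By Theorem~\ref{CaracteritzacioPrime} (applied with $|T|=|\p|$, which is the natural setting for the present statement), the assignment $v \mapsto p_v := \sigma(\Lab(v))$ is a bijection from the vertex set of $T$ onto $\p$. I will use this bijection tacitly to identify vertices of $T$ with elements of $\p$.

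The hard part will be the following claim: whenever $v$ is a leaf of $T$, the set $\p \setminus \{p_v\}$ is again prime. To prove it I would pick $w = x+y \in \p \setminus \{p_v\}$, so that primality of $\p$ gives $x \in \p$ or $y \in \p$. The dangerous configurations to rule out are $x=p_v$ with $y\notin\p$, its symmetric counterpart, and $x=y=p_v$. In each such case $w = p_u$ for some non-root vertex $u$ of $T$, and its parent $u'$ in $T$ satisfies $\Lab(u) = (a \pm \Lab(u'))$ for some fresh variable $a$ by Definition~\ref{Labelization}. Applying $\sigma$ and using unique decomposition in $\M$, one of $x,y$ must equal $\sigma(a)$ and the other must equal $p_{u'}\in\p$. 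Matching this against the hypothesis forces $p_v = p_{u'}$, and injectivity of $\sigma\circ\Lab$ then gives $u' = v$, contradicting the fact that $v$ is a leaf.

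Once this claim is in hand the conclusion is immediate: as $v$ ranges over the leaves of $T$, the subsets $\p\setminus\{p_v\}$ are pairwise distinct elements of $\Spec(\p)$, all of cardinality $|\p|-1$, hence pairwise incomparable. They form the desired antichain and yield $\width(\Spec(\p)) \geq $ the number of leaves of $T$.
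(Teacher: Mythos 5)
Your proof is correct, but it takes a genuinely different route from the paper's. The paper fixes the order-preserving bijection $\sigma\circ\Lab\colon (T;\preceq)\longrightarrow(\p;\leq)$ and builds its antichain from the down-sets $\p_y=\{x\in\p\mid x\leq y\}$ with $y$ ranging over the images of the leaves, arguing that each $\p_y$ is prime and that leaves are sent to $\leq$-maximal elements of $\p$, so that the $\p_y$ are pairwise incomparable. You instead take the co-singletons $\p\setminus\{p_v\}$, get incomparability for free from equal cardinality, and put all the weight on the claim that deleting the image of a leaf preserves primality; your verification of that claim via unique decomposition in $\M$ (the unique additive factor of $p_u$ lying in $\p$ is $p_{u'}$ for $u'$ the parent of $u$, and a leaf is nobody's parent) is sound. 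Both arguments rest on the same tacit hypothesis that $\sigma\circ\Lab$ is injective, i.e.\ $|T|=|\p|$ --- a substitution identifying two siblings can produce $\p\in\Lambda(T)$ with $|\p|<|T|$ --- so your explicit flagging of this is welcome, even though Theorem~\ref{CaracteritzacioPrime} does not quite deliver it for the \emph{given} tree $T$. What your route buys is robustness: the paper's step ``leaves map to $\leq$-maximal elements of $\p$'' does not follow merely from order-preservation and surjectivity, and it can fail; for $T$ a root with two children one may realize $\p=\{1,\,3_-,\,3_-{+}1\}$ with leaf images $3_-$ and $3_-{+}1$, where $3_-\leq 3_-{+}1$, so the down-sets $\{1,3_-\}$ and $\p$ form a chain rather than an antichain, whereas your sets $\p\setminus\{3_-\}=\{1,3_-{+}1\}$ and $\p\setminus\{3_-{+}1\}=\{1,3_-\}$ are both prime and incomparable. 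The paper's construction exhibits more of the intended correspondence between $(T;\preceq)$ and $(\p;\leq)$, but for the stated inequality your argument is the cleaner and, as written, the more watertight one.
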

\begin{proof} 
Let $T$ be a tree and let $\p\in \Lambda(T)$, such that $\p=\sigma(\Lab(T))$ for some labelization $\Lab$ and substitution $\sigma$. 
A rooted tree $T$ can be naturally partially ordered from the relation of descendent, which we notate $(T; \preceq)$. Actually, the mapping $\sigma \circ \Lab: (T; \preceq) \longrightarrow (\p;\leq)$ is bijective and preserves the orders, although it is not an isomorphism. 

For each $y \in \p$ we notate the set $\p_y=\{x \in \p \mid x \leq y\}$. It is easy to prove that $\p_y \in \Spec(\p)$ and that $\p_x \subseteq \p_y$ iff $x\leq y$. 
Now let us note that if $(\sigma\circ \Lab)^{-1}(x)$ and $(\sigma\circ \Lab)^{-1}(y)$ are leaves of the tree $T$ (i.e. they are $\preceq$-maximal elements of $(T;\preceq)$), then, since $\sigma\circ \Lab$ preserve the orders and it is surjective, $x$ and $y$ are $\leq$-maximal in $\p$. Since maximal elements are not comparable, $\p_x$ and $\p_y$ are not comparable. 
This means that the set of the $\p_x$ such that $(\sigma\circ \Lab)^{-1}(x)$ is a leave forms an antichain in $\Spec(\p)$. Since $\sigma\circ \Lab$ is bijective, $\width(\Spec(\p))\geq $ number of leaves.  
\end{proof}

\begin{example} Let us consider the case of the ``slimmest'' prime sets. We say that a prime set $\p$ is \emph{slim} when $\width(\Spec(\p))=1$. First we observe that slim prime sets are join-irreducible, see \cite{davey2002introduction}. Consider the decomposition $\p=\q \cup \rp$. Since the width is one, $\Spec(\p)$ only contains antichains of size one. Then $\q \subseteq \rp$ or $\rp \subseteq \q$. In the first case $\p=\q\cup \rp=\rp$, in the second case $\p=\q\cup \rp=\q$.

We can represent all slim prime sets. Let $\p \in \Lambda(T)$ with $\p$ slim. By Lemma~\ref{NumberLeaves}, $T$ has at most one leave, therefore $T$ must be a path tree. By applying a labelization from Definition~\ref{Labelization} to a path tree we obtain that slim prime sets of size $n$ are of the form: 
\begin{align*}
\{1, a_1\pm 1, a_2 \pm (a_1 \pm 1),  \ldots, a_n \pm (a_{n-1} \pm ( \cdots ( a_1 \pm 1)\cdots)\},
\end{align*}
for some $a_1, \ldots, a_n \in \M$. However, not all prime sets of such form are slim. Consider for example $\p=\{1,2,3_-, 2{+}3_-\}$ for which we have the antichain: $\{1,3_-,2{+}3_-\}$ and $\{1,2, 2{+}3_- \}$. 
Regarding closed sets, one can see easily that the unique slim closed sets must be of the form $[2^n]=\{1, 2, 2^2, \ldots, 2^n\}$ for some $n\geq 0$. 
\end{example}


\end{document}